\documentclass{article}
\usepackage{graphicx} 
\usepackage[utf8]{inputenc}
\usepackage{float}
\usepackage{url}

\usepackage{amssymb,amsthm,amsmath}

\usepackage[margin=1in]{geometry}
\usepackage{thm-restate}
\usepackage{enumerate}
\usepackage{subcaption}
\usepackage{cite}
\usepackage[ruled,vlined,linesnumbered]{algorithm2e}
 \usepackage{xcolor}
\usepackage{hyperref}
\usepackage{cleveref}
\usepackage{listings}
\hypersetup{
    colorlinks=true,
    linkcolor=blue,
    filecolor=magenta,      
    urlcolor=cyan,
    pdftitle={Overleaf Example},
    pdfpagemode=FullScreen,
    }
\usepackage{bbm}

\usepackage{tikz}
\usetikzlibrary{fit,arrows.meta,positioning,shapes.geometric}

\newtheorem{theorem}{Theorem}

\newtheorem{corollary}{Corollary} 
\usepackage{amsmath}
\usepackage{orcidlink}
\usepackage{framed}

\usepackage[normalem]{ulem} 

\usepackage{marginnote}
\normalmarginpar

\newcommand{\lineofalg}[2]{Line \ref{#1} of \Cref{#2}}

\newcommand{\xb}{\mathbf{x}}
\newcommand{\cb}{\mathbf{c}}
\newcommand{\yb}{\mathbf{y}}
\newcommand{\eb}{\mathbf{e}}
\newcommand{\gb}{\mathbf{g}}

\newcommand{\Hb}{\mathbf{H}}

\newcommand{\sv}{\mathbf{s}}

\newcommand{\ub}{\mathbf{u}}

\newcommand{\Yb}{\mathbf{Y}}
\newcommand{\zb}{\mathbf{z}}

\newcommand{\ellb}{\mathbf{\ell}}

\newcommand{\DD}{\Delta\!\!\!\!\Delta}

\newcommand{\gammainc}{\gamma_{inc}}
\newcommand{\Deltamax}{\Delta_{\max}}

\newcommand{\Reals}{\mathbb{R}}

\newcommand{\Prob}{\mathbb{P}}
\newcommand{\Exp}{\mathbb{E}}
\newcommand{\Ind}{\mathbbm{1}}

\title{Importance Sampling in Expensive Finite-Sum Optimization via Contextual Bandit Methods}
\author{Matt Menickelly\footnote{Argonne National Laboratory, 9700 S. Cass Ave., Lemont, IL 60439, USA.}\orcidlink{0000-0002-2023-0837}}

\begin{document}
\maketitle

\begin{abstract}
In computational science workflows, it is often the case that 1) objective functions for optimization involve multiple simulation outputs, and 2) those simulations can be performed (at least partially) in parallel. 
In this work, we reexamine past work on a class of randomized algorithms, stochastic average model (SAM) methods. SAM methods are conceptually similar to stochastic average gradient methods, and effectively require that only randomized subsets of simulation outputs be locally modeled in each iteration of a model-based optimization method. 
This work focuses on the question of how best to perform this randomization of subset selection, especially in settings where there exists useful side information such as alternative lower-fidelity simulations, pre-trained emulators or domain expertise from humans or AI models.
In particular, we consider the problem of generating sampling distributions for SAM methods as a contextual bandit problem and we apply the Exponential weights algorithm for Exploration and Exploitation with Experts (Exp4). 
We provide some preliminary numerical results on synthetic problems. 
\end{abstract}

\section{Problem Setting}\label{sec:problem} 
Let $F:\Reals^{dim}\to\Reals^p$ denote a vector-valued function of $\xb\in\Reals^{dim}$, with the output vector denoted by $F(\xb) = [F_1(\xb),\dots,F_p(\xb)]^\top$. 
We consider the finite-sum minimization problem
\begin{equation}\label{eq:sum}
\displaystyle\min_{\xb\in\Reals^{dim}} \displaystyle\sum_{i=1}^p F_i(\xb).
\end{equation}

Our primary motivation comes from simulation-based optimization, where the evaluation of $F(\xb)$ is assumed to be computationally expensive. 
As a  motivating example, 
we refer to the problem of density functional theory (DFT) calibration. 
In such problems, each $F_i$ in \cref{eq:sum} is a likelihood term that fits, to experimental values, an observable computed from a DFT simulation (parameterized by $\xb$) of a particular nucleon.
Executing the DFT forward model involves a computationally intensive solver such as \texttt{HFBTHO} \cite{marevic2022axiallydeformed}.
As additional corrections (e.g., particle number projection (PNP) or angular momentum projection (AMP)) are added to the model, the computational complexity of the forward model increases. 
The application of a derivative-free trust-region method, Practical Optimization Using No Derivatives and Exploiting Recognized Structure (\texttt{POUNDERS}, available in the \texttt{IBCDFO} repository \cite{ibcdfo}), to such problems when the likelihood function is expressed as a least-squares minimization problem, has been studied intensively \cite{SWCHAP14}. 
Bollapragada  et al. \cite{Bollapragada2020}
compared the performance of various derivative-free and approximate stochastic gradient-type methods on a class of problems of DFT calibration and demonstrated an empirical preference for model-based derivative-free optimization methods, such as \texttt{POUNDERS}. 

In the motivating DFT example, and in many instances in computational science, it is the case that batches of $\{F_j(\xb): j=1,\dots,p\}$ can be computed independently, and hence, can be computed in parallel. 
As a simplifying assumption in our problem setting throughout this paper, we will suppose that any arbitrary subset of $\{F_j(\xb): j=1,\dots,p\}$ can be computed in parallel without loss of efficiency. 
This assumption is not always realized in practice; in fact, in the DFT calibration case, if one particular calculation of $F_j(\xb)$ is performed, then there are several additional $F_{j'}(\xb)$ that one ought to calculate to avoid a loss in efficiency, for example when $F_j$ and $F_{j'}$ are observables of the same nucleus. 
We can propose several means to extend the methodology presented in this paper to handle these more realistic computational constraints, but in order not to distract from the key ideas being presented, we elect to make the simplifying assumption of computational independence of all $\{F_j(\xb)\}$ calculations. 

\subsection{Stochastic Average Model Methods}
In recent work, we proposed a framework for a class of methods for this problem setting which we call \emph{stochastic average model (SAM) methods} \cite{SAMP2022}. 
SAM methods take inspiration from stochastic average gradient (SAG) methods \cite{Schmidt2013} and, more specifically, the SAGA method \cite{DefazioBL14} by employing importance sampling via an \emph{ameliorated model}. 

The key idea behind SAM methods is that they maintain, for each of $j=1,\dots,p$, a separate model $m_j(\xb; \cb_j)\approx F_j(\xb)$, intended to be valid for all $\xb$ in a neighborhood of $\cb_j\in\Reals^{dim}$. 
We omit technical details that quantify approximation quality, but as a pedagogical example, one could imagine that $m_j(\xb; \cb_j)$ is a(n approximate) first-order Taylor model of $F_j$ centered at $\cb_j$; that is, 
$m_j(\xb; \cb_j) = F_j(\cb_j) + \nabla F_j(\cb_j)^\top (\xb - \cb_j)$.
Being iterative methods, SAM methods generate a sequence of \emph{incumbents} $\{\xb^k\}$ that serve as current best guesses of a solution to \cref{eq:sum} in the $k$th iteration of the method. 
At the start of the $k$th iteration of a SAM method, only a subset $I^k\subseteq\{1,\dots,p\}$ indexing center points $\{\cb_j: j\in I^k\}$ are updated to the incumbent $\xb^k$. 
In the specific example of using first-order Taylor models as each $m_j(\xb;\cb_j)$, such an update entails the computation of gradients $\{\nabla F_j(\xb^k): j\in I^k\}$;
note, importantly, that we do not compute $\{\nabla F_j(\xb^k): j\not\in I^k\}$ in this pedagogical example, potentially yielding savings in expensive gradient computations.
In black-box settings where gradient computations are either very difficult or impossible to perform, one can employ model-based derivative-free optimization (DFO), see e.g. \cite[Section 2.2]{LMW2019AN}.
In model-based DFO, the update of a model center $\cb_j$ to an incumbent $\xb^k$ entails performing enough additional function evaluations of $F_j$ to guarantee that a(n interpolation) model $m_j(\xb; \xb^k)$ is a \emph{fully linear} model of $F_j$ in a neighborhood (of some prescribed volume) of $\xb^k$; see e.g. \cite[Chapter 6]{Conn2009a} for a detailed discussion of fully linearity and example numerical algorithms for constructing such models.  

Having defined $p$ independent models $\{m_j: j=1,\dots,p\}$ with corresponding centers $\cb_j$ (we will denote centers as $\cb^k_j$ when discussing them within the context of an iterative method), we can then define the \emph{average model} in the $k$th iteration,
\begin{equation}
    \label{eq:average_model}
    \bar{m}^k(\xb) := \displaystyle\sum_{j=1}^p m_j(\xb; \cb_j^k). 
\end{equation}
One could propose, at this point, an algorithm that iteratively solves a sequence of (trust-region) subproblems, each minimizing an average model of the form in \cref{eq:average_model}. 
However, analogously to SAG methods, the average model $\bar{m}^k$ clearly exhibits a good deal of bias as a pointwise estimate of the \emph{ideal model} in the $k$th iteration, 
\begin{equation}
    \label{eq:ideal_model}
    m^k(\xb) = \displaystyle\sum_{j=1}^p m_j(\xb; \xb^k) 
\end{equation}
in the sense that the expectation, with respect  to the filtration generated by $I^0,I^1,\dots, I^k$, of the average model $\bar{m}^k$ evaluated at $\xb$ can be a poor estimate of the ideal model value $m^k(\xb)$. 
With this bias in mind, we construct instead the \emph{ameliorated model}, 
\begin{equation}
    \label{eq:ameliorated_model}
    \hat{m}^k(\xb) := \bar{m}^{k-1}(\xb) + \displaystyle\sum_{j\in I^k} \frac{m_j(\xb;\xb^k)-m_j(\xb;\cb_j^{k-1})}{\pi^k_j},
\end{equation}
where $\pi^k_j$ denotes the probability
    $\pi^k_j := \Prob\left[ j\in I^k\right]$. 
We can immediately demonstrate just from definitions that the ameliorated model is a pointwise unbiased estimate of the ideal model: 
\begin{theorem}\label{thm:unbiased}
For any $\xb\in\Reals^{dim}$, 
    $$\mathbb{E}_{I^k}\left[ \hat{m}^k(\xb) \right] = m^k(\xb).$$
\end{theorem}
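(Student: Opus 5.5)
The plan is to rewrite the random sum in \cref{eq:ameliorated_model} as a sum over all indices weighted by indicator variables, and then apply linearity of expectation. Conditional on the history $I^0,\dots,I^{k-1}$, the objects $\bar{m}^{k-1}$, the incumbent $\xb^k$ and the previous centers $\cb_j^{k-1}$ are all fixed. So for fixed $\xb$, every term $m_j(\xb;\xb^k)-m_j(\xb;\cb_j^{k-1})$ is a deterministic number, and the only randomness in $\mathbb{E}_{I^k}$ comes from the draw of $I^k$. I will state this explicitly: the expectation is taken over $I^k$ with everything generated before iteration $k$ held fixed. This is the one point that needs care.

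The key identity is
\begin{equation*}
\sum_{j\in I^k} \frac{m_j(\xb;\xb^k)-m_j(\xb;\cb_j^{k-1})}{\pi^k_j} = \sum_{j=1}^p \Ind[j\in I^k]\,\frac{m_j(\xb;\xb^k)-m_j(\xb;\cb_j^{k-1})}{\pi^k_j}.
\end{equation*}
Taking the expectation and using $\mathbb{E}_{I^k}[\Ind[j\in I^k]]=\pi^k_j$, each summand becomes exactly $m_j(\xb;\xb^k)-m_j(\xb;\cb_j^{k-1})$. Here I assume $\pi^k_j>0$ for every $j$, which is implicit in dividing by $\pi^k_j$ in \cref{eq:ameliorated_model}. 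This step needs no independence among the events $\{j\in I^k\}$; linearity of expectation suffices.

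Adding back the deterministic term $\bar{m}^{k-1}(\xb)=\sum_{j=1}^p m_j(\xb;\cb_j^{k-1})$ from \cref{eq:average_model}, the sum telescopes term by term to $\sum_{j=1}^p m_j(\xb;\xb^k)$. By \cref{eq:ideal_model} this is $m^k(\xb)$, which proves \Cref{thm:unbiased}. The main obstacle is not computational. It is stating precisely what is being conditioned on, so that $\xb^k$ and $\cb_j^{k-1}$ count as constants under $\mathbb{E}_{I^k}$, and recording the positivity assumption on $\pi^k_j$.
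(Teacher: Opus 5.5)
Your proposal is correct and matches the paper's proof: it too writes the sum over $I^k$ as a sum over all $j$ with indicators $\Ind[j\in I^k]$, uses $\Exp_{I^k}[\Ind[j\in I^k]]=\pi^k_j$ by linearity of expectation, and cancels against $\bar{m}^{k-1}(\xb)=\sum_{j=1}^p m_j(\xb;\cb_j^{k-1})$. Your explicit remarks about conditioning on the history before iteration $k$ and requiring $\pi^k_j>0$ make precise what the paper leaves implicit.
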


\begin{proof}{Proof}
$$\begin{array}{rl}
\Exp_{I^k}\left[ \hat{m}^k(\xb)\right] = & 
\Exp_{I^k}\left[\bar{m}^{k-1}(\xb) + \displaystyle\sum_{j=1}^p \frac{m_j(\xb;\xb^k)-m_j(\xb;\cb_j^{k-1})}{\pi^k_j}  \Ind\left[j\in I^k\right]\right]\\
= & \bar{m}^{k-1}(\xb) + \displaystyle\sum_{j=1}^p \left(m_j(\xb;\xb^k)-m_j(\xb;\cb_j^{k-1})\right) \\
= & \displaystyle\sum_{j=1}^p \left(m_j(\xb;\xb^k)-m_j(\xb;\cb_j^{k-1}) + m_j(\xb;\cb_j^{k-1})\right)\\
= & \displaystyle\sum_{j=1}^p m_j(\xb;\xb^k) = m^k(\xb). \\
\end{array}
$$
\end{proof}
With this unbiasedness made precise, we see that in each iteration $k$, \emph{any} choice of nonzero probabilities $\{\pi^k_j: \pi^k_j \in (0,1]\}$ will yield an unbiased ameliorated model. 

With the definition of the ameliorated model $\hat{m}^k$ in hand, we can describe a general SAM method in \Cref{alg:sam}.

\begin{algorithm}[h!]
\caption{Stochastic Average Model Method}
\label{alg:sam}
\DontPrintSemicolon

\KwIn{Initial point $\xb^0\in\Reals^{dim}$, maximum trust-region radius $\Deltamax > 0$, initial radius $\Delta_0\in(0,\Deltamax)$, increase factor $\gammainc > 1$, success constant $\eta_1\in(0,1)$, growth factor $\eta_2>0$, expected batch size $b\in[1,p]$}

Initialize model centers $\{\cb^j \gets \xb^0: j=1,\dots,p\}$\;
Construct models $\{m_j(\xb; \cb^j): j=1,\dots p\}$\;

\For{$k=1,2,\dots,K$}{
    Choose $\pi^k \in b\DD^p$ and draw $I_k$ as an independent sample according to $\pi^k$\; \label{line:ask_for_probs}
    
    \ForEach{$j\in I_k$}{
        $\cb^k_j\gets \xb^k$\; \label{line:model_updates}
    }
    \ForEach{$j\in\{1,\dots,p\}\setminus I_k$}{
        $\cb^k_j\gets \cb^k_{j-1}$\;
    }
    
    (Approximately) solve the trust-region subproblem:
    \begin{equation}\label{eq:trsp}
    \sv^k \gets \arg\min_{\sv:\|\sv\|\leq\Delta_k} \hat{m}^k(\xb^k + \sv)
    \end{equation}\; \label{line:trsp}
    
    Compute $f(\xb^k + \sv^k)$ and
    \[
    \rho_k \gets \frac{f(\xb^k) - f(\xb^k+\sv^k)}{\hat{m}^k(\xb^k) - \hat{m}^k(\xb^k + \sv^k)}
    \] \label{line:rho_compute}
    
    \eIf{$\rho_k > \eta_1$ \textbf{and} $\Delta_k < \eta_2\|\nabla \hat{m}^k(\xb^k)\|$\label{line:update_tr}}{
        $\xb^{k+1} \gets \xb^k + \sv^k$\;
        $\Delta_{k+1}\gets\min\{\gammainc\Delta_k, \Deltamax\}$\;
    }{
        $\xb^{k+1} \gets \xb^k$\;
        $\Delta_{k+1}\gets \gammainc^{-1}\Delta_k$\;
    }
}
\end{algorithm}

In the initialization step of \Cref{alg:sam}, we initialize all model centers $\cb^k_j$ to $\xb^0$ and construct a set of models $m_j(\xb;\cb^j)$; we note that a prior model could alternatively be provided in this step if it were available. 
At the start of the $k$th iteration, a scaled discrete probability distribution $\pi^k$ on $\{1,\dots,p\}$ is generated and a random sample $I_k$ is drawn such that $\mathbb{P}[j\in I_k] = \pi^k_j$ for all $j=1,\dots,p$.
Much more discussion will come in this paper regarding the generation of $\pi^k$. 
Throughout this paper we denote the simplex $\DD^p:= \{\zb\in\Reals^p: \sum_{j=1}^p \zb_j = 1, \zb_j\geq 0 \hspace{1pc} \forall j=1,\dots,p\}$.
As a practical remark, the requirement that $\pi^k\in b\DD^p$ only ensures that the expectation of $|I_k|$ is $b$; it does not enforce that $|I_k|=b$ deterministically. 
This will be discussed in more detail in \Cref{sec:implementation}.
The ameliorated model is then computed according to \cref{eq:ameliorated_model} using the data $\pi^k$ and $I_k$. 
The remainder of the iteration of the SAM method is identical to a standard trust-region optimization method, with an ameliorated model being used as the objective model in each iteration.  
Convergence results for SAM methods have been previously established \cite{SAMP2022}. 

As a critical remark, for the sake of exposition, the SAM method in \Cref{alg:sam} is slightly different from the SAM method presented in \cite{SAMP2022}.
In particular, \Cref{alg:sam} only employs an ameliorated model for the purpose of constructing a model of the objective function to be minimized in the trust-region subproblem.
As stated, \Cref{alg:sam} does not exhibit an independent ameliorated model (indexed by a secondary set $J_k$ in \cite{SAMP2022}) to estimate the decrease in the objective function in the numerator of $\rho_k$.
This choice implies that the sequence of incumbents $\{\xb^k\}$ generated by \Cref{alg:sam} correspond to a sequence of objective values $\{f(\xb^k)\}$ that are monotonically nonincreasing, due to the definition of $\rho_k$. 
However, this choice is entirely expositional, made for the sake of clearly explaining our use of contextual bandits.
Our implementation of \Cref{alg:sam} discussed in \Cref{sec:implementation} does in fact generate a second set $J_k$ corresponding to a separate set of probabilities $\pi^{k, J}$ generally distinct from $\pi^k$.  
These details will be discussed in \Cref{sec:implementation}. 

Intuitively, some probabilities $\{\pi^k_j\}$ will yield realizations of the ameliorated model that are empirically better than others. 
In previous work, 
we proposed choosing a set of probabilities $\{\pi^k_j\}$ that would yield a minimum variance unbiased estimator in a precise sense, inspired by work including \cite{CsibaRichtarik2018,HorvathRichtarik2019}.  
This was accomplished by first restricting the sampling of $I_k$ to independent (Poisson) sampling, as is shown in 
\lineofalg{line:ask_for_probs}{alg:sam}. 
We then formulated the optimization problem that minimized the variance of the ameliorated model under Poisson sampling subject to a constraint on the expected cardinality of $|I^k|$ ($|I^k|\leq b$), as well as bound constraints on each $\pi^k_j$ ($0\leq \pi^k_j \leq 1$ for all $j=1,\dots,p$). 
Analysis of the Karush Kuhn Tucker conditions reveals that the solution to this optimization problem can be stated in closed form as
\begin{equation}
\label{eq:opt_probs}
\pi_{(j)}^{k*} = \left\{ 
\begin{array}{rl}
(b + c - p) \displaystyle\frac{d^k_{(j)}}{\displaystyle\sum_{i=1}^c d^k_{(i)}} &\text{ if } j\leq c\\
1 & \text{ if } j > c,\\
\end{array}\right.
,
\end{equation}
where $c$ is the largest integer satisfying 
\begin{equation*}
\label{eq:kkt_ineqs}
0<b+c-p\leq \displaystyle\sum_{i=1}^c \displaystyle\frac{d^k_{(i)}}{d^k_{(c)}},
\end{equation*}
we define
\begin{equation}\label{eq:discrepancy}
d^k_j :=  |m_j(\xb;\xb^k) - m_j(\xb;\cb_j^{k-1})|,
\end{equation}
 and we have used the order statistics notation
$d^k_{(1)} \leq d^k_{(2)}\leq \dots \leq d^k_{(p)}.$ 

Obviously, the discrepancies $\{d^k_j\}$ viz. \cref{eq:discrepancy} are unknowable.
If we knew the values of $\{m_j(\xb;\xb^k): j=1,\dots,p\}$, then we could just minimize the ideal model $m^k$ in every iteration of \Cref{alg:sam}, which would be practically preferable to using the randomized ameliorated model $\hat{m}_k$. 

In our previous work \cite{SAMP2022}, we considered different classes of models $m_j$ and used upper bounds on model error to coarsely approximate the value of the discrepancies $d_j^k$. 
However, for virtually any imaginable class of models with practical value in smooth optimization, these upper bounds in turn depend on unknowable constants such as Lipschitz constants of (higher-order) derivatives. 
This is certainly the case, in particular, for both first-order Taylor models and for fully linear models in the DFO setting.
Thus, we proposed simply approximating these Lipschitz constants either via estimates of higher-order derivatives or by the largest observed magnitude of recent (model) gradient/displacement secants. 
More details concerning this Lipschitz estimation will be provided in \Cref{sec:lipschitz}. 

However, employing either such Lipschitz estimation technique to estimate discrepancies $d^k_j$ is undesirable.
As an extreme, but not practically impossible, example, if an estimate of a Lipschitz constant for $\nabla F_j$ is large relative to estimates of all other Lipschitz constants for $\nabla F_{j'}$ with $j'\neq j$ in an early iteration of a SAM method, then the probability assignment in \cref{eq:opt_probs} will proportionally oversample $j$ in subsequent iterations. 
In other words, choosing $\{\pi^k_j\}$ according to \cref{eq:opt_probs} with $\{d^k_j\}$ estimated in this fashion can result in a heavy emphasis on \emph{exploitation} while neglecting \emph{exploration}.

We illustrate this type of failure in \Cref{fig:failure}. 
In this toy example, we let $p=4$ and we let each component function be $F_j(\xb) := 10^j(\xb_j - j)^2.$ We set the batch size $b=1$. 
To create something of a strawman, we changed a few lines of the implementation of the method tested in our numerical results so that it \emph{does not employ the bandit method} that we will propose in what follows. 
This means the strawman method is nearly the \texttt{SAM-POUNDERS} method described in \cite{SAMP2022}, with model updates handled by model-based DFO techniques; in particular (as discussed in \Cref{sec:implementation}), we employ quadratic interpolation models. 
By our choice of a quadratic $F(\xb)$ in this toy example, it is clear that a quadratic interpolation model $m_j$ will \emph{exactly equal} $F_j$ within a finite number of function evaluations. 
Moreover, if we only employ the Lipschitz estimation scheme discussed in more detail in \Cref{sec:lipschitz} to generate $\pi^k$ in \lineofalg{line:ask_for_probs}{alg:sam}, then this also means that eventually the gradient Lipschitz constant estimate for $F_j$ will be exactly $2\times 10^j$.
However, by the same estimation scheme, every gradient Lipschitz constant estimate is initialized to $0$; thus, in effect, the first model $m_j$ to collect enough samples to detect the nonzero curvature in the $j$th direction of $F_j$ will effectively be sampled deterministically. 
Precisely this failure is seen in the realization shown in \Cref{fig:failure}, where apparently $m_2$ was the first model under the Lipschitz estimation scheme to detect nonzero curvature. 

\begin{figure}
    \centering
    \includegraphics[width=.99\textwidth]{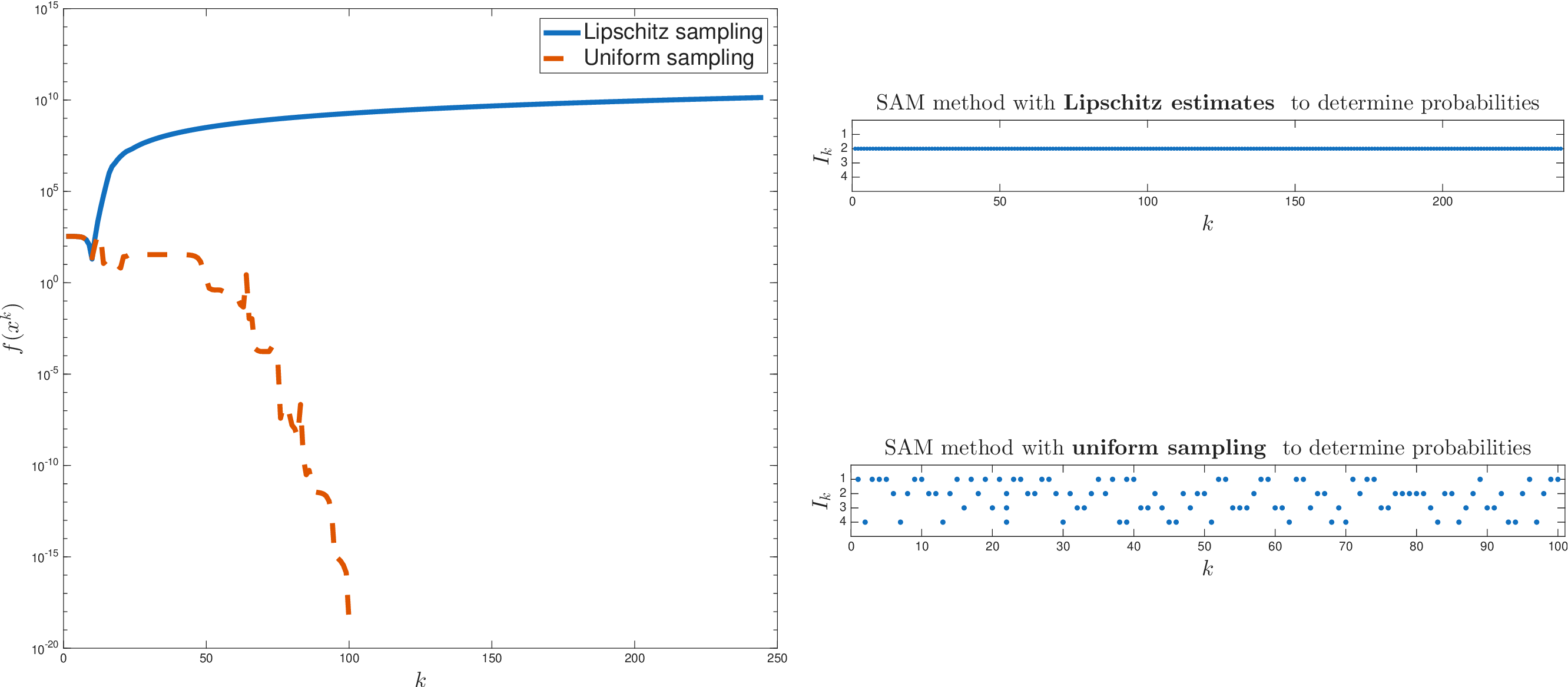}
    \caption{\label{fig:failure} \texttt{SAM-POUNDERS} \cite{SAMP2022} employing quadratic interpolation models applied to a synthetic quadratic problem described in the text. The first variant uses the Lipschitz estimation scheme described in \Cref{sec:lipschitz} to generate $\pi^k$, while the latter updates models uniformly at random. \textbf{Left: } A comparison of a typical realization of the two variants comparing iteration counter to true function value $f(\xb^k)$ (unknown to the randomized method). We see that the method employing the Lipschitz estimation scheme for sampling diverges, while the method employing uniform sampling converges. \textbf{Right: } A visualization of which model was updated in each iteration of both methods. As expected, \texttt{SAM-POUNDERS} employing only Lipschitz estimation gets stuck updating $m_2$ and has no means to recover.}
\end{figure}
In the same \Cref{fig:failure}, we contrast this failure with the effect of simply performing \emph{uniform random sampling}, i.e. $\pi^k = \left[0.25, 0.25, 0.25, 0.25\right]$ for all $k$. 
In this case, we see that had we simply committed to this mode of pure exploration instead of attempting something more exploitative (Lipschitz estimation), the algorithm performance would have been vastly better.

\subsection{Our Contribution}
The purpose of the work discussed in this manuscript is to address precisely the issues resulting from a exploration-exploitation tradeoff seen in \Cref{fig:failure} in a disciplined way by adopting techniques of contextual bandit learning.
This results in running, in parallel with a SAM method, a very minor adjustment to a well-known algorithm for non-stochastic contextual bandits, Exp4 \cite{auer2002nonstochastic}, to generate probability distributions $\pi^k$.
This combination of SAM methods with the variant of Exp4 results in a
practical and theoretically satisfying solution to address the imbalance in exploitation and exploration that existed in our past work.

It is important to note that the convergence results of our past work are not invalidated by the novel work presented here. 
Our past convergence analysis \cite{SAMP2022} was based on the almost sure convergence analysis employed in the STORM framework \cite{Chen2017, BCMS2018}. 
As such, it was assumed that $\pi^k$ was chosen in some way to guarantee that $\{\hat{m}^k\}$ were probabilistically fully linear models. 
In the implementation that accompanied that paper, a \emph{dynamic} batch size $b_k$ was employed that grew with, among other quantities, the inverse of the trust-region radius $\Delta_k$.
As a result, it was virtually always the case that $b_k=p$ by the end of the optimization, thereby recovering deterministically fully linear models, and hence recovering the dynamics of a deterministic trust-region method. 
In this paper, motivated by mitigating computational expense and not over-eagerly increasing $b_k$ based on conservative estimates, we sought (and managed) to fix a \emph{constant} batch size (e.g., $b=1$) in our numerical experiments, highlighting another practical utility of adopting a bandit framework for generating $\pi^k$ in a SAM method. 

Finally, the flexibility of the contextual bandit setting opens other interesting opportunities for methods of generating $\pi^k$ besides attempting to approximate $\pi^{k*}$ in \cref{eq:opt_probs}. 
We specifically identify two classes of these alternatives, which have practical implications in scientific workflows.
The first alternative is in settings where lower-fidelity models or pre-trained surrogates exist for each $F_j$. 
The second takes full advantage of the generality of contextual bandit learning and explicitly allows a domain expert (or foundation model) to inject their beliefs via an arbitrary declaration of $\pi^k$. 

\subsection{Related Work}
The literature on bandit methods is vast and already spans decades. 
The multi-armed bandit (MAB) problem, introduced in \cite{Robbins1952}, provides a canonical framework for studying the exploration--exploitation trade-off in sequential decision making.
In a MAB problem, a (finite) number of bandits\footnote{``One-armed bandit" is a mostly archaic American slang term for a slot machine.} are available to a player; the rewards from pulling an arm are assumed to be governed by some (fixed) distribution, unknown in advance to the player. 
Intuitively, the player's objective is to maximize their total rewards given a fixed budget of sequential arm pulls. 
In the now-classical analysis of bandit problems, one specifies a policy for the player to follow, and then defines a notion of regret with respect to a reference policy. 
Foundational asymptotic lower bounds on regret for the stochastic MAB model were established in \cite{LaiRobbins1985, Auer2002}. 

In contrast, adversarial (non-stochastic) bandits, formalized by \cite{auer2002nonstochastic} and further synthesized in \cite{BubeckCesaBianchi2012}, abandon distributional assumptions and instead guarantee sublinear regret against \emph{arbitrary} sequences of rewards. 
This is the perspective that we will adopt in this paper since, as we will see, our rewards are nonstationary and are effectively a deterministic function of the state of the SAM method. 
We note that while adversarial models are sometimes used as a proxy for nonstationarity, this is a coarse worst-case abstraction that does not explicitly model temporal structure. 
Frameworks do exist that better capture realistic drift while retaining statistical efficiency, see \cite{BubeckSlivkins2012}. 
While developing better models of drift that depend on the partial observability of the state of a SAM method is an avenue for future work, we see the adversarial lens as a useful and meaningful first step. 

A natural extension to the adversarial setting is the contextual bandit setting, where side information is available at each round and the player can condition their action on observed covariates.
Early formulations appeared in \cite{LangfordZhang2008, Li2010}; however, the Exp4 method employed in this work was introduced in \cite{auer2002nonstochastic} and can in fact be viewed as a contextual bandit method where the side information is presented in the form of experts. More generally, contextual bandits connect closely to supervised learning via reductions and policy optimization, as surveyed in \cite{Agarwal2014}, and form a bridge between classical bandits and reinforcement learning.

Finally, we note that this is not the first work to consider employing bandit methods to assist in sampling for randomized methods for solving \cref{eq:sum}.  
Two works independently derived sampling distributions for variance reduction within the context of stochastic gradient descent and SAGA \cite{namkoong2017adaptive, salehi2017stochastic} using an algorithm for stochastic MAB, Exp3, which was also introduced in \cite{auer2002nonstochastic}. 
The use of Exp3 was also investigated as a generator of sampling distributions within Adam \cite{kingma2014adam} in \cite{liu2020adam}. 
\cite{borsos2018} employed a bandit method, Follow the Regularized Leader, introduced in \cite{abernethy2009competing}, also for variance reduction in stochastic gradient descent.
By using a MAB model, these works effectively all assumed a stationary reward distribution. 
Some efforts were made to explicitly address nonstationarity in later works \cite{el2020adaptive, zhao2025} by adapting bandit methods to an assumption of nonstationary reward distributions. 

We note that the union of all these existing works that employ bandit methods to sample in finite sum minimization problems were motivated primarily by learning problems, especially deep learning and federated learning, and thus it was of paramount importance that the cost of a bandit algorithm not dominate the (fairly inexpensive) cost of computing a stochastic gradient estimator. 
Because this work is motivated by far more expensive computational settings, we are able to employ relatively expensive ``experts" in Exp4 like pretrained surrogates, low-fidelity models, or even human/AI expertise; it is because of this niche that this work occupies that we believe this particular work is novel. 

In the remainder of this paper, we introduce the setting of contextual bandit learning in \Cref{sec:contextual}.
We specify the variant of Exp4 that we employ in \Cref{sec:exp4}. 
We propose alternative methods for generating $\pi^k$ in \Cref{sec:experts}. 
We perform preliminary numerical experiments in \Cref{sec:experiments} and provide concluding remarks in \Cref{sec:conclusion}.

\section{Contextual Bandit Learning}\label{sec:contextual}
In this section, will deliberately overload notation in our definition of the contextual bandit learning problem in an effort to make it clear how we choose to cast the problem of sampling in a SAM method as a contextual bandits problem. 
\Cref{tab:notation_guide} will explicitly clarify the use of overloaded notation. 

A \emph{bandit problem} involves both a player and an environment participating in a game of a discrete number $K$ of rounds, which we index by $k$. 
In the $k$th round of the game, the player is presented with $p$ ``arms".
The $j$th arm is assigned a reward $d_j^k$, but the player does not know any of the rewards at the start of the round. 
The player must choose $b$ many arms in the $k$th round; we will denote her selection by $I^k\subset\{1,\dots,p\}$ such that $|I^k|=b$. 
The player then immediately observes each $\{d_j^k: j\in I^k\}$ and the $k$th round ends. 

What distinguishes a typical bandit problem from a \emph{contextual bandit problem with experts} is the additional presence of entities usually called \emph{experts}. 
At the beginning of the $k$th round, the player is additionally provided a set of $N$ probability vectors $\mathbf{e}^{k,1}, \dots, \mathbf{e}^{k,N}\in\DD^p$. 
We refer to each such probability vector $\mathbf{e}^{k,n}\in\DD^p$ as the $n$th expert's \emph{advice}.
The player does not decide which arms to pull until after she receives advice from all $N$ experts.
For the sake of development, how the experts generate their advice is irrelevant and should be treated as a black box.
For a more glass-box treatment, the experts can be viewed as a set of (supervised learning) models that ingest some quantifiable and measurable \emph{context} produced by the environment, and then output advice.

In the bandit problem with experts, a standard notion of regret is defined with respect to consistently following a fixed expert's advice. 
That is, if the player consistently heeded the $n$th expert's advice, and then randomized her draws so that $\Prob[j\in I_k] = \mathbf{e}^{k,n}_j$ for $j=1,\dots,p$, then the expected cumulative reward that she will receive at the end of $K$ rounds is
$$C^{K,n} := \displaystyle\sum_{k=1}^K \displaystyle\sum_{j=1}^p \mathbf{e}^{k,n}_j d^k_j.$$
Thus, a good benchmark for any algorithm the player devises is to compare her algorithm's expected cumulative reward to the expected cumulative reward of the \emph{best} expert. 
That is, we define the player's \emph{expected regret} from using a fixed algorithm to play this game for $K$ rounds as
\begin{equation}\label{eq:regret}
R^K := \displaystyle\max_{n=1,\dots,N} C^{K,n} - \mathbb{E}\left[\displaystyle\sum_{k=1}^K \displaystyle\sum_{j\in I_k} \mathbf{d}_j^k\right],
\end{equation}
where the expectation is taken over the filtration defined by the random variables $\{I_k: k=1,\dots,K\}$. 

If the player employs a canonical algorithm known as \textbf{Exp4} (\textbf{Exp}onential-weight algorithm for \textbf{Exp}loration and \textbf{Exp}loitation using \textbf{Exp}ert advice) \cite{auer2002nonstochastic} to determine each $\Prob[j\in I_k]$, then one can show her regret is bounded like $R^K\in\mathcal{O}\left(\displaystyle\sqrt{p\log(N)K} \right)$, 
provided we make the simplifying assumption that the rewards are scaled so that $d^k_j\in[0,1]$ for all $k,j$. 
That is, given the particular notion of regret $R^K$ in \cref{eq:regret}, \textbf{Exp4} yields regret sublinear in $K$.
Sublinear regret is desirable in bandit problems since it implies that the player's average regret $R^K/K\to 0$ as the rounds of the game $K\to\infty$. 

By the choice of our overlapping notation, it is hopefully already clear how we choose to map the problem of choosing sampling distributions in a SAM method onto a bandit problem with experts. 
We highlight this overlap in \Cref{tab:notation_guide} for completeness. 
In summary, at the end of the $k$th iteration (round) of a SAM method, the player must choose a subset $I_k$ of model centers to update to the next iteration's incumbent, $\xb^{k+1}$, in order to define the ameliorated model $\hat{m}^k$, see \cref{eq:ameliorated_model}.
Because the update of models $\{m_j:j\in I_k\}$ entails new function evaluations at points including $\xb^{k+1}$, the player receives rewards $d^k_j$ in the form of absolute model errors, see \cref{eq:discrepancy}. 
These rewards (error magnitudes) directly inform the player about how useful the model update she chose to perform was, which allows her to accordingly update her algorithmic strategy (in this paper, one determined by Exp4) for performing future updates. 

\begin{table}
\begin{tabular}{@{}l@{\quad}l@{\quad}l@{}}
\hline 
Notation              & Bandit Learning     & SAM method            \\ \hline
$k$    & Index for rounds of play   & Iteration counter \\ 
$p$   & Finite number of arms   & Finite number of component functions $F_j$ \\
$d_j^k$    & Reward for pulling arm $j$ in round $k$ & Error of $m_j$ in $k$th iteration (see \cref{eq:discrepancy}) \\
$I_k$     & Subset of arms in $\{1,\dots,p\}$ pulled in round $k$ & Subset of $\{m_j: j = 1,\dots,p\}$ recentered at $\xb^k$\\ \hline
\end{tabular}
\caption{Notation guide, highlighting the intentional overlap in notation between Exp4 (\Cref{alg:exp4}) and SAM methods (\Cref{alg:sam}). \label{tab:notation_guide}}
\end{table}

\section{A Bandit Method, Exp4}\label{sec:exp4}
Given our setting and notation, we state a variant of Exp4 in \Cref{alg:exp4}. 
In Exp4, the player maintains a set of weights $\{w_{k,n}\}$, one for each player $n\in \{1,\dots,N\}$,  updated in each round $k$.
If the player had a prior relative weighting that they would place on expert advice, then $w_{1,n}$ could be instantiated with those weights; by default, we simply initialize $w_{1,n}=1$ for all $n$ in \Cref{alg:exp4}, thereby giving all experts equal weight.
In the $k$th round, the player receives the advice $\eb^{k,n}\in b\DD^p$ from each expert. 
The probability distribution $\pi^k_j$ is generated by first taking the normalized weighted sum over the $n$ advices, and then weighting this result with the uniform distribution over all $p$ bandits, rescaled to a vector in $b\DD^p$. 
Then, $I_k$ is drawn such that $|I_k|=b$ and $\Prob[j\in I_k] = \pi^k_j$ for all $j=1,\dots,p$.
The corresponding rewards $\{d^k_j: j\in I_k\}$ are received and are then scaled by their corresponding probabilities $\pi^k_j$. 
Then, every expert's weight is updated by a multiplicative factor that is exponential in the inner product between the corresponding expert's advice and the observed scaled reward, and the round ends. 
It is the exponential magnitude of this update that contributes to one of the ``Exp"s in Exp4's name, and it becomes apparent in analysis (see the proof of \Cref{thm:sublinear_regret}) why an exponential update is necessary. 
It is instructive to note the intuition that an alignment of an expert's relative confidence in their advice with the bandits that actually produced large rewards results in a proportionally large update in the weight that the player will assign to that expert in future rounds. 


\begin{algorithm}[h!]
\caption{Exp4}
\label{alg:exp4}
\DontPrintSemicolon

\KwIn{Exploration parameter $\gamma\in(0,1]$, batch size $b < p$}

Initialize expert weights $w_{1,n}\gets 1$ for $n=1,\dots,N$\;

\For{$k=1,2,\dots,K$}{
    Get expert advice vectors $\{\mathbf{e}^{k,n}: n=1,\dots,N\}$\;
    
    $W_k\gets \sum_{n=1}^N w_{k,n}$\;
    
    $\pi_j^k\gets (1-\gamma)\sum_{n=1}^N \frac{w_{k,n} e_j^{k,n}}{W_k} + \frac{\gamma b}{p}$ for $j=1,\dots,p$\; \label{line:probs_out_of_exp4}
    
    Draw $I_k\subset \{1,\dots,p\}$ such that $\Prob[j\in I_k] = \pi_j^k$ for $j=1,\dots,p$\;\label{line:draw_ik}
    
    Receive rewards $\{d_j^k: j\in I_k\}$\; \label{line:ask_for_reward}
    
    Set $\hat{d}_j^k \gets 
    \begin{cases}
    d_j^k/\pi_j^k & \text{if } j\in I_k \\
    0 & \text{otherwise}
    \end{cases}
    \quad \text{for } j=1,\dots,p$\;
    
    $w_{k+1,n}\gets w_{k,n}\exp\left(\frac{\gamma \mathbf{e}^{k,n\top} \hat{\mathbf{d}}^k}{p}\right)$ for $n=1,\dots,N$\; \label{line:update_weights}
}
\end{algorithm}
As previously suggested, we will demonstrate that \Cref{alg:exp4} exhibits sublinear regret with respect to a best expert. 
By itself, this result is not completely new, and is just a slight modification of results proven in \cite{auer2002nonstochastic}. 
However, we provide a proof in the appendix for completeness since \Cref{alg:exp4} differs slightly from statements found in the bandit literature, both in terms of notation and the use of a batch size parameter $b>1$. 
The first result we demonstrate bounds regret $R^K$ in terms of an arbitrary exploration parameter $\gamma$. 
Define an \emph{uniform expert} as an expert that, given $p$ arms, chooses $\mathbf{e}^{k}_j = \frac{1}{p}$ for all $j\in\{1,\dots,p\}$ and for all $k$. 

\begin{theorem}\label{thm:sublinear_regret}
Suppose one of the $N$ experts providing advice in \Cref{alg:exp4} is an uniform expert.
Suppose there is a uniform bound over all rewards $\mathbf{d}^k_j\leq D$ for all $j\in\{1,\dots,p\}$ and $k\in\{1,\dots,K\}$. 
Then, the player's expected regret $R^K$, defined in \cref{eq:regret}, is bounded like
$$R^K\leq ((e-2)bD + 1)\gamma \displaystyle\max_{n=1,\dots,N} C^{K,n} + \displaystyle\frac{p \ln N}{\gamma}.$$
\end{theorem}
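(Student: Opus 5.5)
The plan is to adapt the standard potential-function argument of \cite{auer2002nonstochastic} for Exp4 to the batch-size-$b$ setting of \Cref{alg:exp4}. The potential is $\ln(W_{K+1}/W_1)$, where $W_1=N$. I will bound it from below in terms of the best expert's cumulative estimated reward and from above in terms of the player's own reward plus a second-order term. Throughout I write $q_{k,n}:=w_{k,n}/W_k$ and $y_{k,n}:=\eb^{k,n\top}\hat{\mathbf{d}}^k$. I will use that the importance-weighted estimates are unbiased: $\Exp_{I_k}[\hat d^k_j]=d^k_j$, so $\Exp[y_{k,n}]=\eb^{k,n\top}\mathbf{d}^k$.

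\emph{Lower bound.} Since $W_{K+1}\ge w_{K+1,n}$ for every $n$,
$$\ln\frac{W_{K+1}}{W_1}\ge \frac{\gamma}{p}\sum_{k=1}^K y_{k,n}-\ln N \quad\text{for every } n.$$
Taking expectations gives $\frac{\gamma}{p}C^{K,n}-\ln N$, and I then pick the $n$ attaining $\max_n C^{K,n}$.

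\emph{Upper bound.} For each round,
$$\ln\frac{W_{k+1}}{W_k}=\ln\sum_n q_{k,n}e^{\gamma y_{k,n}/p}.$$
I will use $e^x\le 1+x+(e-2)x^2$ for $x\le 1$, followed by $\ln(1+z)\le z$. This yields
$$\ln\frac{W_{k+1}}{W_k}\le \frac{\gamma}{p}\sum_n q_{k,n}y_{k,n}+\frac{(e-2)\gamma^2}{p^2}\sum_n q_{k,n}y_{k,n}^2 .$$
For the first-order term, the definition of $\pi^k$ in \lineofalg{line:probs_out_of_exp4}{alg:exp4} gives $\sum_n q_{k,n}e^{k,n}_j=(\pi^k_j-\gamma b/p)/(1-\gamma)\le \pi^k_j/(1-\gamma)$. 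Hence
$$\sum_n q_{k,n}y_{k,n}\le \frac{1}{1-\gamma}\sum_{j\in I_k}d^k_j,$$
which is exactly the player's realized reward.

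For the second-order term, I expand $y_{k,n}^2$ and use $\hat d^k_j\le D/\pi^k_j$ together with the advice lying in $b\DD^p$. I aim for a bound of the form
$$\Exp\Big[\sum_n q_{k,n}y_{k,n}^2\Big]\le \frac{bD}{1-\gamma}\sum_{j=1}^p d^k_j .$$
Here one factor of $\hat d$ is controlled by $D/\pi^k_j$, and the remaining $\pi$'s cancel against $\Exp[\Ind[j\in I_k]]=\pi^k_j$.

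\emph{Role of the uniform expert.} The uniform expert is what converts $\sum_{k}\sum_j d^k_j$ into a comparator quantity. Its cumulative reward is proportional to $\sum_k\sum_j d^k_j$, so $\sum_k\sum_j d^k_j\le p\,\max_n C^{K,n}$, after tracking how the advice is scaled relative to $b$.

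\emph{Combining.} I sum over $k$, take expectations, and chain the lower and upper bounds. Multiplying through by $p(1-\gamma)/\gamma$ gives
$$\max_n C^{K,n}-\Exp\Big[\sum_k\sum_{j\in I_k}d^k_j\Big]\le \gamma\max_n C^{K,n}+(e-2)bD\gamma\max_n C^{K,n}+\frac{p\ln N}{\gamma},$$
where the $\gamma\max_n C^{K,n}$ term comes from the factor $(1-\gamma)$. This is the claimed bound.

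\emph{Main obstacle.} The delicate step is legitimizing the inequality $e^x\le 1+x+(e-2)x^2$, which requires $\gamma y_{k,n}/p\le 1$. The same bookkeeping is needed to get the constant $bD$ rather than something worse in the quadratic term. Because $\pi^k_j\ge \gamma b/p$, each term satisfies $e^{k,n}_j\hat d^k_j\le e^{k,n}_j Dp/(\gamma b)$. Summing over at most $b$ indices in $I_k$ is where the batch size enters. If the exponent condition fails for unnormalized $D$, I would first try rescaling the rewards by $D$ (or by $bD$), running the argument with normalized rewards, and then undoing the scaling. This is the most likely source of the $bD$ factor in the statement, and I would check the constants carefully there.
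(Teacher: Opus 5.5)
Your plan matches the paper's proof in all essentials. Both use the same potential, the same lower bound through a single expert's weight, and the same $e^x\le 1+x+(e-2)x^2$ and $\ln(1+z)\le z$ steps. Both use the identity $\sum_n q_{k,n}e^{k,n}_j=(\pi^k_j-\gamma b/p)/(1-\gamma)$ for the linear term, and the uniform expert to bound $\sum_k\sum_j d^k_j$ by $p\max_n C^{K,n}$. The final algebra is also the same; your use of $W_{K+1}$ is tidier than the paper's $W_K$ with sums to $K-1$. Two points need tightening.

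\emph{The quadratic term.} For $b>1$ you need an explicit inequality, and this, not any rescaling, is where $bD$ comes from. The paper applies Cauchy--Schwarz pathwise:
\[
y_{k,n}^2\le\Big(\sum_{j\in I_k}e^{k,n}_j\Big)\Big(\sum_{j\in I_k}e^{k,n}_j(\hat{d}^k_j)^2\Big)\le b\sum_{j\in I_k}e^{k,n}_j(\hat{d}^k_j)^2 .
\]
It then averages over $n$ with the same identity as the linear term. Using $\pi^k_j(\hat{d}^k_j)^2=d^k_j\hat{d}^k_j\le D\hat{d}^k_j$ gives $\sum_n q_{k,n}y_{k,n}^2\le \frac{bD}{1-\gamma}\sum_{j\in I_k}\hat{d}^k_j$, and taking expectations gives exactly your target. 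So $b$ comes from $\sum_{j\in I_k}e^{k,n}_j\le b$ and $D$ from $d^k_j\le D$. Reduce the square to a single sum over $j\in I_k$ before taking expectations. If you expand and take expectations of the cross terms $\hat{d}^k_j\hat{d}^k_{j'}$, the joint inclusion probabilities $\Prob[j,j'\in I_k]$ enter; \Cref{alg:exp4} leaves these unspecified, and the clean cancellation against $\Exp[\Ind[j\in I_k]]=\pi^k_j$ is lost.

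\emph{The exponent condition.} You are right that the exponential inequality requires $\gamma y_{k,n}/p\le 1$; the paper's proof uses it without checking. Your own estimate settles when it holds. Since $\pi^k_j\ge\gamma b/p$ and $\sum_{j\in I_k}e^{k,n}_j\le b$, you get $0\le \gamma y_{k,n}/p\le D$. So the step is guaranteed when $D\le 1$, which is the normalization $d^k_j\in[0,1]$ assumed in \Cref{sec:contextual}. For $D>1$ it can fail, since $\pi^k_j$ can be close to $\gamma b/p$ on arms where a low-weight expert concentrates its advice.

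Your fallback for $D>1$ does not work. The update $\exp(\gamma y_{k,n}/p)$ is not invariant under rescaling the rewards, so running the argument on $d^k_j/D$ analyzes a different algorithm. Undoing the scaling yields $((e-2)b+1)\gamma\max_n C^{K,n}+Dp\ln N/\gamma$, not the stated bound. The correct repair is to add $D\le 1$ as a hypothesis, which the paper's argument silently needs as well.
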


As an immediate corollary, by choosing $\gamma$ as a function of problem data, we can show that $R^K$ is sublinear in $K$.  

\begin{corollary}\label{cor:sublinear}
    Let 
    $$\gamma = \displaystyle\sqrt{\frac{p\ln(N)}{((e-2)bD + 1)bDK}}.$$
    Then, 
    $R^K=\mathcal{O}\left(bD\sqrt{p\ln(N)K} \right).$
\end{corollary}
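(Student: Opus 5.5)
The plan is to substitute the prescribed $\gamma$ directly into the bound of \Cref{thm:sublinear_regret}. First I need a $K$-dependent upper bound on $\max_n C^{K,n}$. Each expert's advice lies in $b\DD^p$, and every reward satisfies $d^k_j \le D$. Hence for every $n$,
$$C^{K,n} = \sum_{k=1}^K \sum_{j=1}^p e^{k,n}_j d^k_j \le \sum_{k=1}^K D \sum_{j=1}^p e^{k,n}_j = bDK .$$
(If the advice is normalized to $\DD^p$ instead, the bound is $DK$, which is only smaller, so $bDK$ is safe in either convention.) Writing $A := (e-2)bD+1$ and $G := bDK$, \Cref{thm:sublinear_regret} gives
$$R^K \le A\gamma G + \frac{p\ln N}{\gamma}.$$

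The second step is to balance the two terms. The stated choice $\gamma = \sqrt{p\ln N/(AG)}$ is exactly the minimizer of $\gamma \mapsto A\gamma G + p\ln N/\gamma$. Substituting it makes each term equal to $\sqrt{A G p\ln N}$, so
$$R^K \le 2\sqrt{\big((e-2)bD+1\big)\, bDK\, p\ln N}.$$

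The third step puts this in the claimed form. We have $(e-2)bD + 1 \le \big((e-2)+1\big)\max\{bD,1\}$. When $bD \ge 1$, this gives $R^K \le 2\sqrt{e-1}\; bD\sqrt{p\ln(N)K}$, which is $\mathcal{O}\big(bD\sqrt{p\ln(N)K}\big)$ as claimed. When $bD<1$, the bound is $\mathcal{O}\big(\sqrt{bD\,p\ln(N)K}\big)$, and I would treat this under the usual convention that $D$ is a scale with $bD\gtrsim 1$ (rewards can be rescaled), absorbing constants into the $\mathcal{O}$.

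The main obstacle is a side condition. \Cref{thm:sublinear_regret} presumably requires $\gamma\in(0,1]$, since \Cref{alg:exp4} takes $\gamma\in(0,1]$. With the prescribed choice this holds only when $K \ge p\ln N/(AbD)$. For smaller $K$ I would argue trivially: the regret is at most $\max_n C^{K,n} \le bDK$, and in that regime $K < p\ln N/(AbD)$ implies $bDK \le \sqrt{bDK\,p\ln N/A}$, which is again of the claimed order. With this case split, the corollary holds for all $K$.
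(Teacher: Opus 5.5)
Your proposal is correct and follows the paper's own argument: bound $\max_n C^{K,n}\le bDK$ and substitute the prescribed $\gamma$ into \Cref{thm:sublinear_regret}. The paper's proof stops at ``the result follows,'' so your extra points are care the paper leaves implicit. These are the explicit balancing computation, the observation that for $bD<1$ one only gets $\mathcal{O}(\sqrt{bD\,p\ln(N)K})$, and the treatment of the requirement $\gamma\le 1$.
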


\begin{proof}
As a very coarse upper bound, $C^{K,n} \leq bDK$ for all $n\in\{1,\dots,N\}$. The result follows. 
\end{proof}

We will employ \Cref{cor:sublinear} to inform the setting of the parameter $\gamma$ in our numerical experiments, and will make additional comments in \Cref{sec:experts}. 

\section{Examples of Experts}\label{sec:experts}

From \Cref{cor:sublinear}, our motivation is now clear. 
We will generate a collection of experts $\{\eb^n:n\in 1,\dots,N\}$ and mix their per-iteration advice $\eb^{k,n}$ according to \Cref{alg:exp4} in order to generate distributions $\pi^k$ for use in \Cref{alg:sam}. 
We would like to suppose that at least one expert $\eb^n$ advises, for all $k$, a probability distribution $\eb^{k,n}$ that yields sufficient variance reduction in the construction of the ameliorated model $\hat{m}_k$ to make $\hat{m}_k$ a suitable (for example, probabilistically fully linear \cite{Chen2017, BCMS2018}) model of the objective $f$ in the trust region defined by $\xb^k$ and $\Delta_k$. 
Ideally, such an expert $\eb^n$ always approximates the optimal, but unknowable, distribution \cref{eq:opt_probs}.
Practically, however, only an expert with essentially perfect oracular knowledge of $f$ could satisfy any such guarantees.
Thus, being practically motivated, we strive to design good experts, but fall back on our assurance that, as demonstrated in \Cref{cor:sublinear}, the performance of $\pi^k$ generated by \Cref{alg:exp4} can never be significantly worse than uniform random sampling, in a precisely quantifiable sense. 

The subsections following this discussion focus on how to design these experts, but for concreteness, we first show explicitly the connections between \Cref{alg:sam} and \Cref{alg:exp4}, graphically depicted in \Cref{fig:tikz}.
\Cref{alg:sam} and \Cref{alg:exp4} are simultaneously initialized and share an iteration counter $k$. 
In \lineofalg{line:ask_for_probs}{alg:sam}, the SAM method will wait for \Cref{alg:exp4} to return $\pi^k\in b\DD^p$, computed in \lineofalg{line:probs_out_of_exp4}{alg:exp4}. 
Using the probabilities $\pi^k$, a realization of $I_k$ is generated in \lineofalg{line:draw_ik}{alg:exp4}; the model centers corresponding to $I_k$ are then updated in \lineofalg{line:model_updates}{alg:sam}. 
With these model updates, one can then (approximately) compute, for each $j\in I_k$,
\begin{equation}\label{eq:max_discrepancy_in_delta_ball}
d^k_j := \displaystyle\max_{\sv: \|\sv\|\leq\Delta_k} |m_j(\xb^k + \sv;\xb^k) - m_j(\xb^k + \sv; \cb_j^{k-1})|.
\end{equation}
We note that the tractability of solving \cref{eq:max_discrepancy_in_delta_ball} to global optimality is, of course, dependent on the choice of model class to which $m_j$ belongs.
By design, the rewards in \cref{eq:max_discrepancy_in_delta_ball} are directly related to the unknowable discrepancies \cref{eq:discrepancy} that appear in the variance of the ameliorated model. 
In this sense, per-iteration maximization of rewards is directly related to per-iteration maximization of variance reduction. 
The reward $d^k_j$ is fed into \lineofalg{line:ask_for_reward}{alg:exp4} and the per-expert weights are eventually updated with these rewards in \lineofalg{line:update_weights}{alg:exp4}. 

\begin{figure}\caption{\label{fig:tikz} An illustration of a single iteration $k$ of a SAM method (\Cref{alg:sam}) and the variant of Exp4 (\Cref{alg:exp4}) being performed in parallel.
The oracle that computes $F(\xb)$ is visualized as existing outside of the environment that contains both the SAM method and Exp4, and is only capable of communicating with the SAM method. 
We note that the experts live in the environment that contains both the SAM method and Exp4 and is allowed to observe the dynamics of the SAM method. 
However, the experts are incapable of interacting directly with the $F(\xb)$ oracle.}
\begin{tikzpicture}[
>=Stealth,
node distance=1.7cm,
process/.style={
    rectangle,
    draw,
    rounded corners,
    align=center,
    text width=2.9cm,
    minimum height=0.9cm
},
plainbox/.style={
    rectangle,
    draw,
    align=center,
    text width=2.6cm,
    minimum height=0.9cm
},
group/.style={
    rectangle,
    draw,
    rounded corners,
    inner sep=6pt
},
arrow/.style={->,thick},
bidir/.style={<->,thick}
]


\node[process, anchor=north] (a1model)
{Update model centers $\{c_j^k: j\in I_k\}$\\ (Line \ref{line:model_updates})};

\node[process,below=of a1model] (a1trsp)
{Solve subproblem \cref{eq:trsp}\\ (Line \ref{line:trsp})};

\node[process,below=of a1trsp] (a1evaluate)
{Evaluate $f(x^k+s^*)$\\ (Line \ref{line:rho_compute})};

\node[process,below=of a1evaluate] (a1update)
{Update $x^k$ and $\Delta_k$\\ (Line \ref{line:update_tr})};

\draw[arrow] (a1model) -- (a1trsp);
\draw[arrow] (a1trsp) -- (a1evaluate);
\draw[arrow] (a1evaluate) -- (a1update);

\draw[arrow,bend right=45] (a1update.east) to (a1model.east);


\node[process,right=6.0cm of a1model, anchor=north] (a2probs)
{Compute $\pi^k$ using $\{w_{k,n}, \mathbf{e}^{k,n}\}$ (Line \ref{line:probs_out_of_exp4})};

\node[process,below=of a2probs] (a2sample)
{Sample $I_k$ using $\pi^k$ (Line \ref{line:draw_ik})};

\node[process,below=of a2sample] (a2update)
{Update $\{w_{k+1,n}\}$ using $\{d^k_j\}$ (Line \ref{line:update_weights})};

\draw[arrow] (a2probs) -- (a2sample);
\draw[arrow] (a2sample) -- (a2update);

\draw[arrow,bend right=45] (a2update.east) to (a2probs.east);


\node[process,right=0.9cm of a1model,text width=2.3cm] (experts)
{Experts};


\node[plainbox,left=1.0cm of a1evaluate] (oracle)
{$F(\mathbf{x})$ oracle};


\draw[arrow] (a2sample) -- node[above,sloped]{$I_k,\pi^k_j$} (a1model);

\draw[arrow] (a1model) -- node[above,sloped]
{$d_j^k: j\in I_k$} (a2update);

\draw[bidir] (a1evaluate) to (oracle);

\draw[bidir] (a1model) to (oracle); 

\draw[bidir] (experts) to
node[below]{$\mathbf e^{k,n}$} (a2probs);


\node[group,fit=(a1model)(a1update),label=above:SAM method (\Cref{alg:sam})] (alg1box) {};

\node[group,fit=(a2probs)(a2update),label=above:Exp4 (\Cref{alg:exp4})] (alg2box) {};


\node[group,fit=(alg1box)(alg2box)(experts),inner sep=14pt] {};

\end{tikzpicture}
\end{figure}

Given this proposed combination of \Cref{alg:sam} and \Cref{alg:exp4}, we can now remark on the appropriateness of the result established in \Cref{cor:sublinear}.
There, we demonstrated that the cumulative regret of using \Cref{alg:exp4} to select $I_k$ over $K$ rounds with respect to a best expert only grows like $\sqrt{K}$; hence, the average (per-iteration) regret over $K$ rounds decays like $1/\sqrt{K}$.
In our past work \cite{SAMP2022}, we showed how \Cref{alg:sam} can be analyzed as a STORM method \cite{Chen2017}, and hence \Cref{alg:sam} inherits the same convergence rates.
In particular, given certain assumptions that amount to probabilistic bounds on the variance of the ameliorated models $\hat{m}^k$, we can show with probability one that $\|\nabla f(\xb_K)\|\leq \epsilon$ for $K\in\mathcal{O}(1/\epsilon^2)$ \cite{BCMS2018}.
This translates to a convergence rate on the order of $1/\sqrt{K}$, which matches the average regret rate implied by \Cref{cor:sublinear}.
Thus, the average regret from running \Cref{alg:exp4} does not dominate the worst-case convergence rate of \Cref{alg:sam}, and so the result of \Cref{cor:sublinear} appears appropriate for this setting.
This is admittedly a handwavy argument, but in future work, we will analyze these rates jointly, since the dynamics of the optimization method \Cref{alg:sam} and the sampling method \Cref{alg:exp4} are clearly intertwined. 

We now give three distinct suggestions for how to build experts.

\subsection{Online estimation of Lipschitz constants}\label{sec:lipschitz}
Here we give more detail to the idea of Lipschitz estimation discussed previously and employed in previous work\cite{SAMP2022}.
We will design an expert that maintains a running estimate $L_j$ of the (local) Lipschitz constant of each $\nabla F_j$. 
How the estimate $L_j$ is calculated depends on the model class used for constructing each model $m_j$. 
In the simple example of first-order Taylor models, upon every update of $\cb_j$ to $\xb^k$, we obtain a lower bound on the local Lipschitz constant $L_j$ via the secant 
$$\displaystyle\frac{\|\nabla F_j(\xb^k) - \nabla F_j(\cb_j)\|}{\|\xb^k-\cb_j\|}.$$
In the DFO setting, if we employ (perhaps underdetermined) quadratic interpolation models $m_j$, we can use the maximum absolute eigenvalue of the model Hessian $\nabla^2_{\xb} m_j(\xb;\xb^k)$ as an estimate of a local Lipschitz constant $L_j$. 
In either such setting, and to maintain some sense of locality, one can further consider using a mechanism like an exponential moving average of these coarse local Lipschitz constant approximations. 

Given the model class for constructing each $m_j$, and an associated means of (coarsely) approximating $L_j$, an expert can then use its estimate of $L_j$ and other contextual information generated by \Cref{alg:sam} to estimate an upper bound on the rewards in \cref{eq:max_discrepancy_in_delta_ball}. 
As a straightforward example, and as was done in \cite[Section 3.1]{SAMP2022}, when each $m_j$ is a first-order Taylor model, 
we can substitute the coarse approximation for $L_j$ to yield the approximate upper bound 
$$d^k_j \approx \displaystyle\frac{L_j}{2} \left(\Delta_k^2 + (\|\xb^k - \cb_j\| + \Delta_k)^2\right).$$
Similar upper bounds on rewards for other classes of models, including DFO models, can also be calculated, but are more complicated, see \cite[Section 3.3]{SAMP2022} for more details.
Common to the calculation of these bounds on $d^k_j$ is that they all depend only on contextual information generated by \Cref{alg:sam}, and they do not require additional evaluations of any $F_j$. 

An expert, indexed by $n$, that dynamically estimates local Lipschitz constants can then compute its advice $\mathbf{e}^{k,n}\in b\DD$ in the $k$th iteration of \Cref{alg:exp4} by plugging in its estimates of $d^k_j$ into the closed-form expression \cref{eq:opt_probs}.

\subsection{Offline surrogates or alternative low-fidelity models}\label{sec:surrogate}
Consider a setting where we have access to pre-trained surrogate models $\sigma_j\approx F_j$ for each $j\in\{1,\dots,p\}$, each trained on prior evaluations of $F_j$.
The notion of ``$\approx$" here is intentionally vague; each $\sigma_j$ only needs to be a reasonable approximation of $F_j$ in some (possibly non-quantifiable) sense held by a domain practitioner. 
Relatedly, if we have access to a lower-fidelity model $\sigma_j\approx F_j$ for each $j\in\{1,\dots,p\}$ (where, again, ``$\approx$" is intentionally vague), then we can apply the same ideas we propose here to yield an expert, but we note that no explicit correlation between $\sigma_j$ and $F_j$ must be identified or modeled. 
We minimally assume that queries to an oracle for $\sigma_j$ are relatively negiglible in cost compared to queries to $F_j$. 

Within this setting, an expert with knowledge of the models $\{\sigma_j:j=1,\dots,p\}$ can estimate the effect of updating the $j$th model by substituting any required information concerning $F_j$ with information concerning $\sigma_j$, thus giving the expert an estimate of each $d^j_k$. 
For example, in the case where $m_j$ is a first-order Taylor model, the expert can directly compute the reward $d^k_j$ in \cref{eq:max_discrepancy_in_delta_ball} by solving 
\begin{align*}
 d^k_j = &   \displaystyle\max_{\sv:\|\sv\|\leq\Delta_k} \left|\left(\sigma_j(\xb^k) + \nabla \sigma_j(\xb^k)^\top\sv\right) - m_j(\xb^k+\sv; \cb^{k-1}_j)\right|\\
= & \displaystyle\max_{\sv:\|\sv\|\leq\Delta_k} \left|
\left(\sigma_j(\xb^k) - \sigma_j(\cb^{k-1}_j)\right) + (\nabla \sigma_j(\xb^k) - \nabla F_j(\cb^{k-1}_j)^\top \sv\right|\\
= & |\sigma_j(\xb^k) - \sigma_j(\cb^{k-1}_j)| + \Delta_k\|\nabla \sigma_j(\xb^k) - \nabla F_j(\cb^{k-1}_j)\|.
\end{align*}
Similarly, in the DFO model case, the expert can approximate the reward $d^k_j$ in \cref{eq:max_discrepancy_in_delta_ball} by 1) identifying all of the geometry points -- call this set $\Yb$ -- that would have to be evaluated if the model center $\cb^{k-1}_j$ was updated to $\xb^k$, 2) evaluating $\sigma_j(\yb)$ at each $\yb\in\Yb$, and then 3) tentatively updating the model with these $(\yb, \sigma_j(\yb))$ data pairs to yield $m^{\sigma}_j(\xb;\xb^k) \approx m_j(\xb;\xb^k)$.
Then, the expert can estimate $d^k_j$ by (approximately) solving
\begin{equation}\label{eq:max_discrepancy_dfo}
d^k_j\approx \displaystyle\max_{\sv:\|\sv\|\leq\Delta_k} \left|m^{\sigma}_j(\xb + \sv;\xb^k) - m_j(\xb^k+\sv;\cb^{k-1}_j) \right|.
\end{equation}
We recall again that the tractability of computing the right hand side of \cref{eq:max_discrepancy_dfo} depends on the choice of model class $m_j$.
The right hand side of \cref{eq:max_discrepancy_dfo} admits a closed-form solution, for example, when $m_j$ are linear interpolation models, but obtaining the global maximum of the right hand side of \cref{eq:max_discrepancy_dfo} becomes generally more difficult as the model complexity of $m_j$ increases. 
As with the Lipschitz constant-estimating expert, the expert in the offline surrogate setting can once again produce advice $\mathbf{e}^{k,n}\in b\DD$ by plugging the estimated rewards $d^k_j$ into \cref{eq:opt_probs}. 

\subsection{Human and AI experts}
One could imagine that a human expert (in particular, a domain scientist with deep knowledge of what the oracle $F(\xb)$ represents) could examine all the data generated by \Cref{alg:sam} up until the $k$th iteration. 
Unlike the experts employing Lipshitz estimation or surrogate models defined in \Cref{sec:lipschitz} and \Cref{sec:surrogate} respectively, the human expert likely has their own beliefs about how well they can predict values of $F_j(\xb^k)$ for different $j\in\{1,\dots,p\}$ based on the data generated by \Cref{alg:sam} \emph{in addition to} the context of what each parameter in $\xb$ and what each component function $F_j$ represent.
A human expert's ability to translate these beliefs into a set of probabilities $\mathbf{e}^{k,n}\in b\DD^p$ is likely heuristic, but not without practical value. 

In the same sense, a foundation model -- ideally, either fine-tuned with some curated training dataset providing meaningful context about what the parameters $\xb\in\Reals^{dim}$, and about what the observables $\{F_j: j=1,\dots,p\}$, represent, or provided a suitable means for retrieval-augmented generation (RAG) -- can provide a probabilistic means of mapping the data generated by \Cref{alg:sam} directly into probability vectors. 
This could be achieved, for instance, by simply providing a foundation model with some appropriately formatted history of a current run of \Cref{alg:sam}, along with context of what $\xb, F(\xb)$ represent. 
The foundation model could then be prompted to provide a set of relative importance weights for updating models, which might potentially have to be normalized into $b\DD^p$ in a post-processing step. 

We note that keeping a human in the loop as an expert\footnote{Noting, of course, that a human expert indexed by $n$ might be less than thrilled to discover that they are being assigned a relatively low weight $w_{k,n}$ in later iterations of \Cref{alg:exp4}.}, or employing a foundation model as an expert, may appear less mathematically rigorous than the other two methods we have proposed here, but this underscores the generality of the contextual bandit framework -- \emph{any procedure, which optionally ingests context, to produce vectors in $b\DD^p$ is a valid expert.} 

\section{Implementation and Numerical Experiments}\label{sec:experiments}
We begin this section by describing some key features of our implementation of \Cref{alg:sam}, which makes use of \Cref{alg:exp4} as described in \Cref{sec:experts}. 
The implementation used in our experiments is available at the static repository \url{https://github.com/mmenickelly/bandit_sampounders}.
We will then conclude this section by providing some numerical experiments, which can be replicated using the code in the same repository. 

\subsection{Implementation}\label{sec:implementation}
Due to our primary research motivations, we have only experimented with a variant of \Cref{alg:sam} for derivative-free optimization.
Towards this end, we built a SAM method, which we name \texttt{SAM-POUNDERS}, on top of the existing code \texttt{POUNDERS} (Practical Optimization Using No Derivatives Exploiting Recognizable Structure) available in the \texttt{IBCDFO} (Interpolation-Based Composite Derivative-Free Optimization) repository \cite{ibcdfo}. 
\texttt{POUNDERS} is a model-based derivative-free optimization method designed to solve problems of the form 
\begin{equation}\label{eq:pounders_prob}
\displaystyle\min_{\xb\in\Reals^{dim}} h(F(\xb)) \quad \text{s. to} \quad \ellb\leq\xb\leq \ub,
\end{equation}
where
$F:\Reals^{dim}\to\Reals^p$ is a black-box function, 
$h:\Reals^p\to\Reals$ is a twice-differentiable glass-box function, and
$\ellb,\ub\in\Reals^{dim}$ define box constraints.
\texttt{POUNDERS} was originally designed for black-box nonlinear least squares minimization where $h=\|\cdot\|^2$; 
we note that $h$ can also be a function that sums the entries of its $p$-dimensional argument, hence making \texttt{POUNDERS} appropriate for solving \cref{eq:sum}. 

Being a model-based trust-region method, \texttt{POUNDERS} will maintain models $\{m_j(\xb): j=1,\dots,p\}$ via interpolation.
By default, \texttt{POUNDERS} will employ (underdetermined) quadratic interpolation models so that 
$$m_j(\xb) = h(F_j(\cb_j)) + \gb_j^\top (\xb-\cb_j) + \frac{1}{2}(\xb-\cb_j)^\top \Hb_j(\xb-\cb_j)$$
for $j=1,\dots,p$ where each $\gb_j, \Hb_j$ are fit via some interpolation scheme. 
When a model $m_j$ is updated in \Cref{alg:sam}, a \texttt{POUNDERS} subroutine is called, which constructs a new model centered at $\xb^k$, re-utilizing as many past function evaluations as possible.
The \texttt{POUNDERS} subroutine is designed to be judicious in evaluating $F$, so as to limit the number of function evaluations performed when updating a model. 
The \texttt{POUNDERS} model-updating subroutines employ a rank-revealing QR decomposition of displacements of previously-evaluated points from $\xb^k$ to determine a subset of the previously-evaluated points to use as interpolation points; a previously-evaluated point is replaced with a new point only when its corresponding $R$-factor pivot is deemed too small in absolute value. 
This process is described in detail in \cite[Section 4]{SW08}.
We note that this implies an update to $m_j$ can involve anywhere between $1$ and $dim+1$ many function evaluations of $F_j$. 

In our experiments, we will exclusively let $h=\|\cdot\|^2$.
As a result, we update our definitions of the average and ameliorated models as
\begin{equation}\label{eq:pounders_ameliorated}
\bar{m}^k(\xb) := \displaystyle\sum_{j=1}^p m_j(\xb;\cb^k_j)^2, 
\quad \hat{m}^k(\xb) := \bar{m}^{k-1}(\xb) + \displaystyle\sum_{j\in I_k} \frac{m_j(\xb;\xb^k)^2 - m_j(\xb;\cb_j^{k-1})^2}{\pi^k_j}
\end{equation}
respectively,
and observe that despite these new definitions, we maintain the crucial property that
\begin{equation}\label{eq:expected_is_ideal}
    \mathbb{E}_{I_k}\left[\hat{m}^k(\xb)\right] = \displaystyle\sum_{j=1}^p m_j(\xb;\xb^k)^2.
\end{equation}
Thus, in the spirit of \texttt{POUNDERS} being able to exploit known derivative information of $h$, 
we note that in the trust-region subproblem appearing in \lineofalg{line:trsp}{alg:sam}, we can use chain rule to derive
$$\nabla_{\xb}\left[m_j(\xb;\cb_j)^2\right] = 2m_j(\xb;\cb_j)\left(\gb_j + \Hb_j(\xb-\cb_j)\right),$$
for each $j=1,\dots,p$. 
We then take a $\pi^k$-weighted sum of such gradient approximations according to the ameliorated model definition in \cref{eq:pounders_ameliorated} to define the model gradient $\nabla_{\xb}\hat{m}^k(\xb)$ for use in the trust-region subproblem solver. 

We want to stress that such an approach will \emph{not} work for general $h$ functions.
It is because $h=\|\cdot\|^2$ maintains a finite-sum structure that the unbiasedness property for amelioration, \cref{eq:expected_is_ideal}, is maintained.
Identifying how to make a SAM method work with more general (differentiable) $h$ functions is a topic for future work. 

\paragraph{Randomized estimates of $F(\xb^k+\sv^k)$.} As remarked in our initial discussion of \Cref{alg:sam}, we chose to present \Cref{line:rho_compute} as if it involved a deterministic computation of $f(\xb^k+\sv^k)$. 
However, we in fact also randomize this computation by generating a second sample $J_k\subseteq\{1,\dots,p\}$ according to a second set of probabilities $\pi^{k, J}$, computing any previously uncomputed values of $\{(F_j(\xb^k), F_j(\xb^k+\sv^k)): j\in J_k\}$,
and estimating in the $\rho_k$ test of \Cref{line:rho_compute}
\begin{equation*}
\begin{array}{c}
f(\xb^k) \approx \bar{m}^{k-(1/2)}(\xb^k) 
+ \displaystyle\sum_{j\in J_k} \frac{F_j(\xb^k) - m_j(\xb^k; \cb_j^{k-(1/2)})}{\pi^{k,J}_j},\\
f(\xb^k + \sv^k) \approx \bar{m}^{k-(1/2)}(\xb^k + \sv^k) 
+ \displaystyle\sum_{j\in J_k} \frac{F_j(\xb^k + \sv^k) - m_j(\xb^k + \sv^k; \cb_j^{k-(1/2)})}{\pi^{k,J}_j},
\end{array}
\end{equation*}
where we use the $k-(1/2)$ superscript to index the model centers $\cb_j$ after having already performed the center updates in 
\lineofalg{line:model_updates}{alg:sam}.
We also use the $k-(1/2)$ superscript in the average model $\bar{m}$ implied by these same model centers.  

Our methods for computing $\pi^{k,J}$ are analogous to our methods for computing $\pi^{k}$ for each expert. 
As an example, we look at the case of using offline surrogates presented in \Cref{sec:surrogate}. 
In \cref{eq:max_discrepancy_dfo}, instead of maximizing the estimated model discrepancy over the current trust-region $\{\sv: \|\sv\|\leq\Delta_k\}$, we would instead only maximize over the two-point set $\{\xb^k, \xb^k+\sv^k\}$ to compute values of $d^k_j$. 
An analogous consideration is made for the estimation of model discrepancy via Lipschitz estimation presented in \Cref{sec:lipschitz}.

We note that conditioning the sample $J_k$ on the sample $I_k$ may not be theoretically justified.
We assumed in past work \cite{SAMP2022} that the ameliorated models for estimating $\rho_k$ were independent of the ameliorated models $\hat{m}_k$ employed in the trust-region subproblem. However, we have seen that this dependence of $J_k$ on $I_k$ works well in practice. Demonstrating the theoretical conditions under which this conditioning can be shown not to prevent convergence is an avenue for future work. 

\paragraph{Reward scaling.} As seen in \Cref{cor:sublinear}, cumulative regret scales linearly with $D$, and our choice of exploration parameter $\gamma$ ought to scale with $1/D$, where $D$ is an upper bound on $d^k_j$ across all $K$ rounds (iterations) of \Cref{alg:exp4} (\Cref{alg:sam}), and for all $j=1,\dots,p$. 
Being able to scale reward magnitudes, especially in early iterations, is also very much a practical concern since disproportionately large values of $d^k_j$ can create severe imbalances in the weight updates in \Cref{alg:exp4}. 

We found that maintaining, by an exponential moving average, an online estimate $D_k$ of the largest observed reward up until the $k$th iteration was a reasonable practical mediation.
In particular, in the first iteration we store $D_1:=\max\{d^1_j: j\in I_1\}$. 
In every subsequent iteration, we update
$D_k := \delta D_{k-1} + (1-\delta)\max\{d^{k-1}_j: j\in I_{k-1}\}$,
where $\delta\in(0,1)$ is an algorithmic constant; we set $\delta=0.8$ in all our experiments. 
Then, whenever a batch of rewards is received in \lineofalg{line:ask_for_reward}{alg:exp4}, all rewards are first multiplied by $D_k^{-1}$, with the intended effect being that the largest rewards $\{d^k_j: j\in I_k\}$ are $\mathcal{O}(1)$.

We maintain a separate estimate $D_{k,J}$ of the maximum rewards pooled from estimating both $F_j(\xb^k)$ and $F_j(\xb^k+\sv^k)$, but otherwise follow the exact same procedure. 

\paragraph{Enforcing $|I_k|=b$.} For the sake of presentation, we operated until this point under the assumption that we only enforce $|I_k|=b$ in expectation over all draws of $I_k$ governed by $\pi^k$. 
In practice, and in order to ensure that we do not under-utilize or (worse) over-utilize parallel resources, we enforce $|I_k|=b$ explicitly. 
This is accomplished via conditional Poisson sampling, see \cite[Algorithm 2]{SAMP2022} for details. 
In effect, we derive a secondary distribution from $\pi^k$ and perform rejection sampling with this distribution until we draw $I_k$ of size $b$.
The derivation of the secondary distribution ensures that the key property $\mathbb{P}\left[j\in I_k\right] = \pi^k_j$ is maintained for all $j=1,\dots,p$. 

\paragraph{Algorithmic parameters.}
All parameters governing trust-region dynamics in \Cref{alg:sam} are inherited from the default values in \texttt{POUNDERS} and are considered fairly standard across literature for trust-region methods. 
Moreover, all parameters governing model building and updating in \Cref{alg:sam} are inherited from the default values in \texttt{POUNDERS}. 

For parameters concerning \Cref{alg:exp4}, we previously discussed how we perform rewards scaling. 
In all the experiments we present here, we were particularly aggressive and set the batch size $b=1$ for all tests unless otherwise stated.  
We note, however, that $b$ should be determined in general based on available parallel computational resources. 
To set $\gamma$, we found that a tuning derived from \Cref{cor:sublinear} worked well in practice. 
In particular, we estimate $K$ as our total budget for component function evaluations in a single run of an optimization method divided by the batch size $b$. 
This leads to our choice
$\gamma = \sqrt{\left(p\ln(\max\{2,N\})\right)/\left(b(budget)\right)}$,
where $N$ is the number of experts.

\subsection{Numerical Experiments}

We designed a set of three experiments to test the use of each of the three types of experts suggested in \Cref{sec:experts} within \texttt{SAM-POUNDERS}, and we follow the same organization in this section. 

\subsubsection{Lipschitz estimation}\label{sec:lipschitz_exp}
When we employ Lipschitz estimation as the only expert suggesting sampling distributions in \texttt{SAM-POUNDERS}, we are effectively testing the same method that was used in our prior work \cite{SAMP2022}. 
Given the regret bounds associated with \Cref{alg:exp4}, 
we are especially interested in discovering empirical evidence demonstrating how this past mode of generating sampling distributions compares to both 1) using a uniform sampling distribution (that is, the uniform expert of \Cref{thm:sublinear_regret}), and of course 2) using \Cref{alg:exp4} to mix the two experts to form distributions. 

Towards this end, we employ the Mor{\'e}-Wild (MW) benchmarking test set, introduced in \cite{JJMSMW09} and available through the \texttt{BenDFO} repository \cite{BENDFOCode}.
The MW test set consists of 53 vector-valued functions of the form $F:\Reals^{dim}\to\Reals^p$. 
In the problem description given in \cref{eq:pounders_prob}, we let $h=\|\cdot\|^2$.
Due to the design of the MW test set, we know that this composition of $h\circ F$ admits a set of well-defined test problems. 
In the MW test set, $2\leq dim\leq 12$ and $2\leq p \leq 45$. 
For 10 random seeds for each of the 53 problems, we ran \texttt{SAM-POUNDERS} with 1) a single expert defined by Lipschitz estimation, 2) a single expert defined by uniform sampling, and 3) both experts' advice being mixed by \Cref{alg:exp4}.
As a deterministic comparator, we ran \texttt{POUNDERS}. 
Based on our motivations, and adhering to an assumption that all $\{F_j: j=1,\dots,p\}$ are equally expensive to evaluate, we count the total number of single evaluations of \emph{any} $F_j$ as our budget unit. 
Each method was given a budget of $50dp$ evaluations of any $F_j$.
As a reference, \texttt{POUNDERS}, which in effect deterministically selects $I_k=\{1,\dots,p\}$ on every iteration, was therefore effectively given a budget of $50d$ evaluations of $F(\xb)$. 

We compare the four solvers via performance profiles \cite{JJMSMW09}.
In a performance profile, we indicate a problem $prob$ as being solved by a given solver in iteration $k$ when 
$$f(\xb^k) \leq f^{prob}_{best} + \tau (f(\xb^0) - f^{prob}_{best}),$$
where $\tau < 1$ is the \emph{solvedness tolerance}, $\xb^0$ is a problem-specific starting point shared by all solvers applied to $prob$, and $f^{prob}_{best}$ is the best function value found by any solver applied to $prob$. 
We note that because \texttt{POUNDERS} solves all 53 MW problems, this definition of $f^{prob}_{best}$, and hence this definition of solvedness, is reasonable. 
A performance profile plots one line per solver, each of which should be interpreted as an empirical cumulative density function, the support of which is $[1,\infty)$ and represents the ratio of the budget spent by the solver to achieve solvedness on a given problem to the least budget spent by any solver to achieve solvedness on the same problem. 
That is, for each solver, scanning across the $x$-axis from left to right, the $y$-axis value is incremented by the reciprocal of the number of problems in the test set each time a realized ratio between budgets is encountered on the $x$-axis. 

\begin{figure}
    \caption{\label{fig:lipschitz} Performance profiles from experiments comparing variants of \texttt{SAM-POUNDERS} that use Lipschitz estimation as an expert. Left image corresponds to solvedness tolerance $\tau=10^{-3}$, right image corresponds to $\tau=10^{-7}$.}
    \centering
    \includegraphics[width=.49\linewidth]{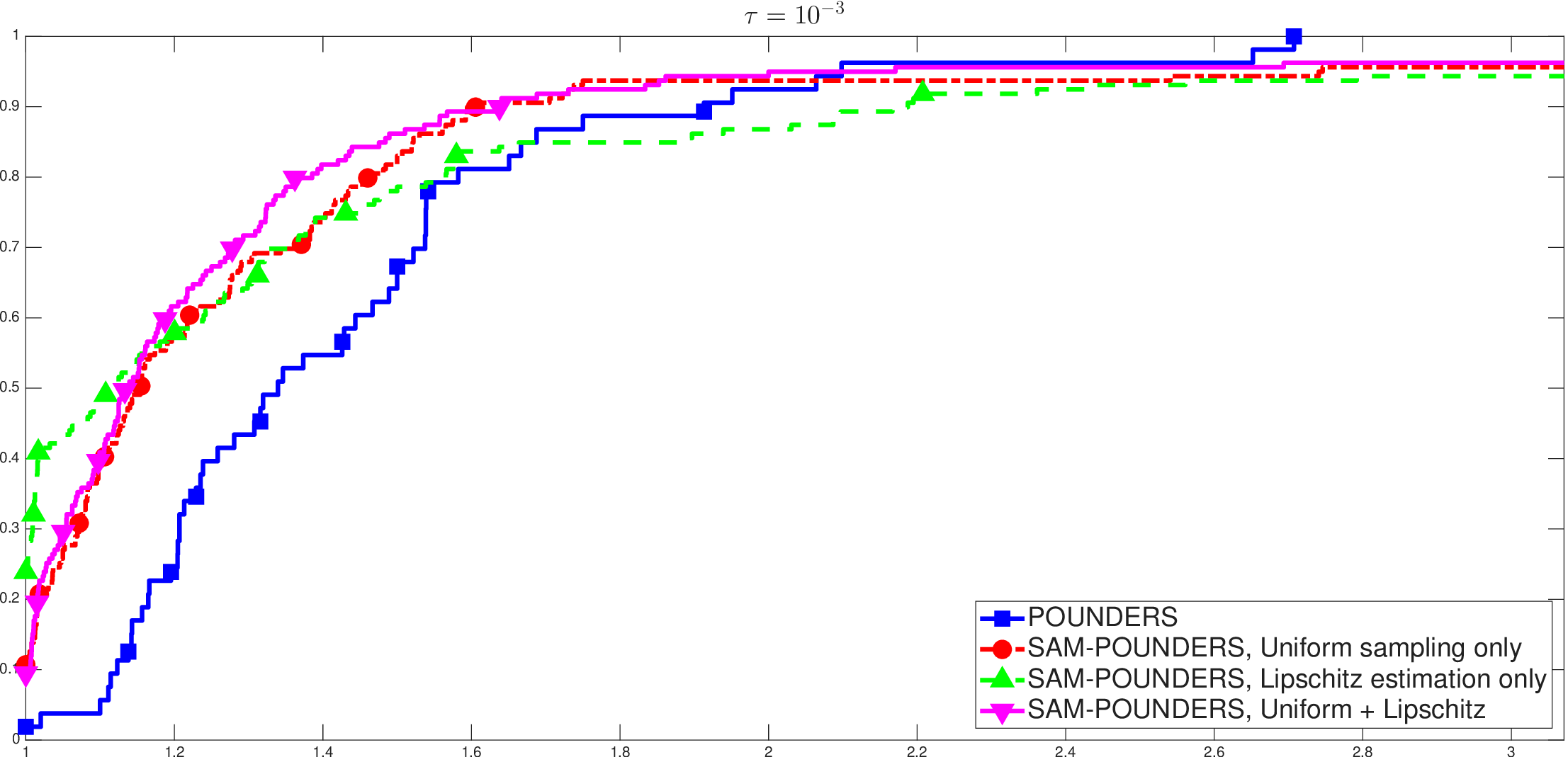}
    \includegraphics[width=.49\linewidth]{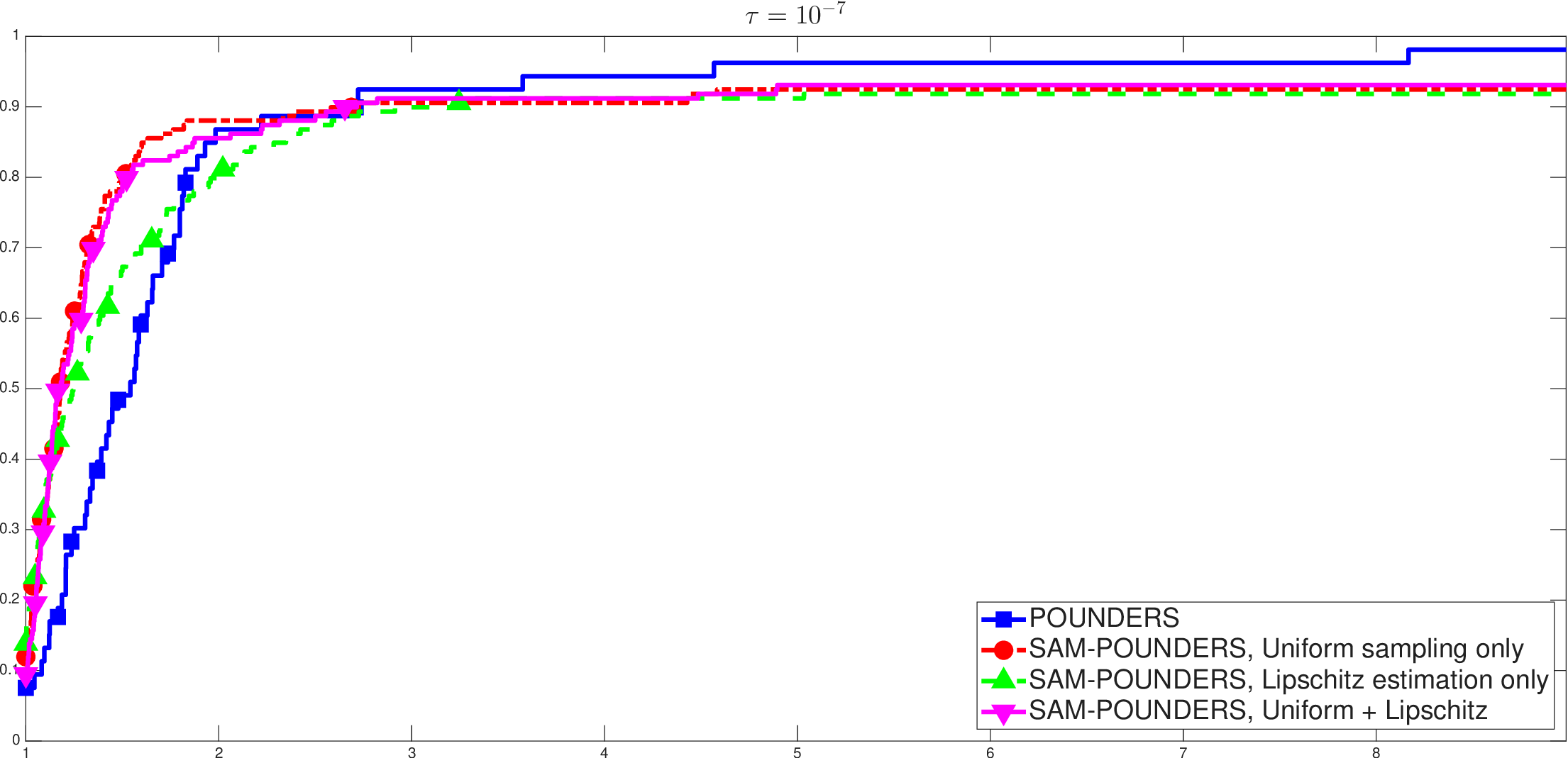}
\end{figure}
Performance profiles for this experiment are displayed in \Cref{fig:lipschitz}. 
Here, we see that at a lower solvedness tolerance ($\tau=10^{-3}$), employing Lipschitz estimation as the only expert solves the greatest fraction of problems fastest. 
However, its robustness relative to uniform sampling quickly drops out. 
In this setting, we see that mixing uniform sampling and Lipschitz estimation yields a very competitive method, serving as the fastest solver of the four tested, and solving over $90\%$ of the problems within budget -- only deterministic \texttt{POUNDERS} solves $100\%$ of the problems here. 
At a tighter solvedness tolerance ($\tau=10^{-7}$), we see that uniform sampling is generally preferable to Lipschitz estimation, and mixing the two experts results in little regret relative to either. 

\subsubsection{Offline surrogates}
In this set of experiments, we will compare \texttt{POUNDERS} and \texttt{SAM-POUNDERS} employing only a uniform expert as in \Cref{sec:lipschitz_exp} to \texttt{SAM-POUNDERS} employing an expert with knowledge of a pretrained surrogate, as discussed in \Cref{sec:surrogate}, as well as to \texttt{SAM-POUNDERS} employing both the uniform expert and the surrogate-defined expert. 
We reuse the MW test set from \Cref{sec:lipschitz_exp}.
To simulate having a pretrained offline surrogate, for each pair of random seed generated and problem in the test set, we generate 500 samples from $[\ellb,\ub]\in\Reals^{dim}$ according to a Latin hypercube design. 
The problems in the MW test set provided in \texttt{BenDFO} do not impose bound constraints $[\ellb,\ub]$ by default; we employ the same procedure discussed in \cite[Section 4.1]{larson2024structure} to generate them by observing the trajectories $\{\xb^k\}$ generated by the deterministic run of \texttt{POUNDERS}. 
Then, we evaluate $F$ at these 500 samples.
For each $j=1,\dots,p$, we construct a surrogate $\sigma_j$ of $F_j$ by employing an off-the-shelf radial basis function interpolation, in particular, one provided by \texttt{Matlab}. 
The collection of surrogates $\{\sigma_j: j=1,\dots,p\}$ are used by an expert as described in \Cref{sec:surrogate}. 

\begin{figure}
    \caption{\label{fig:surrogate} Performance profiles from experiments comparing variants of SAM-POUNDERS that use an expert employing pretrained surrogates. Left image corresponds to solvedness tolerance $\tau=10^{-3}$, right image corresponds to $\tau=10^{-7}$.}
    \centering
    \includegraphics[width=.49\linewidth]{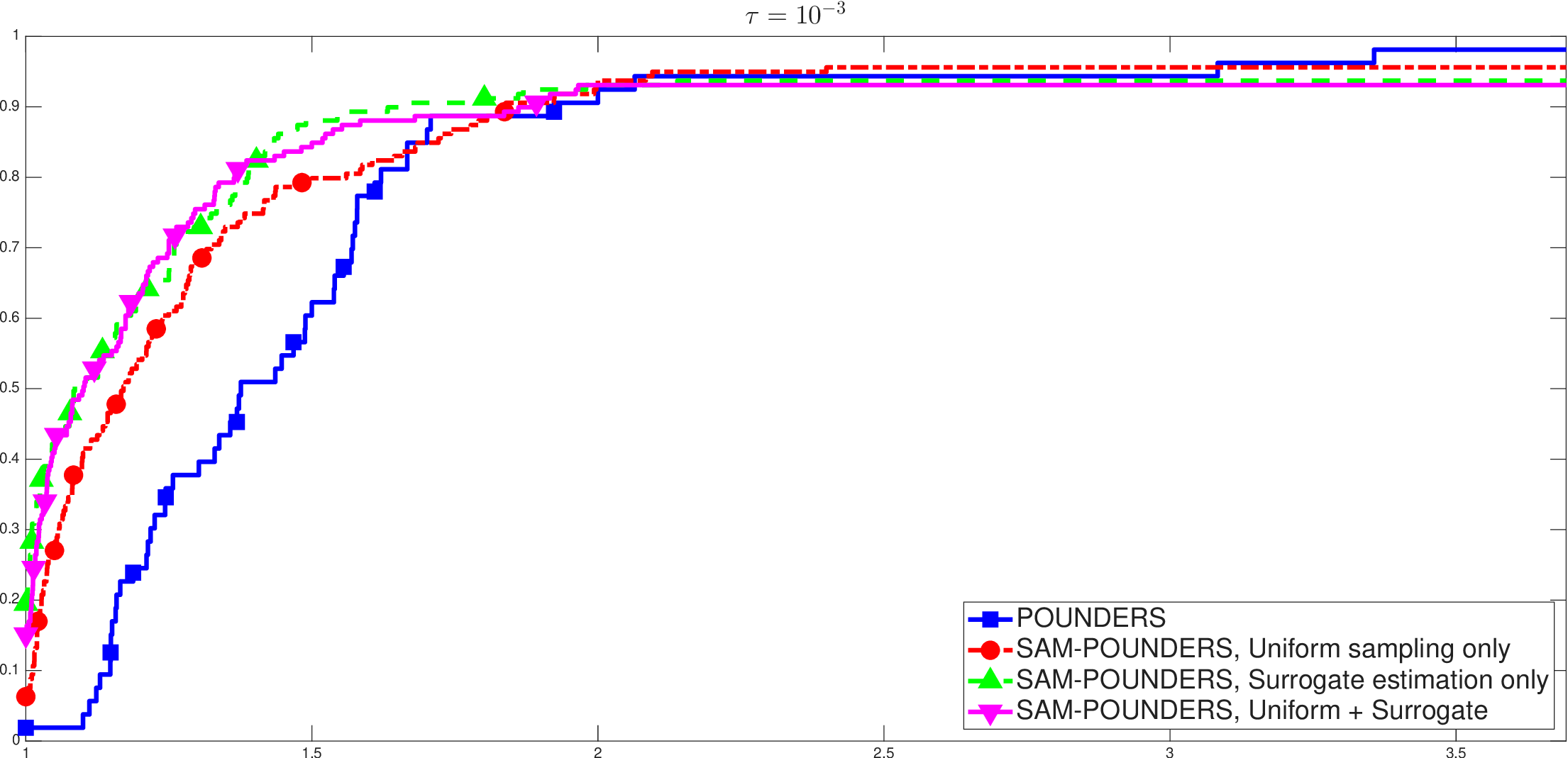}
    \includegraphics[width=.49\linewidth]{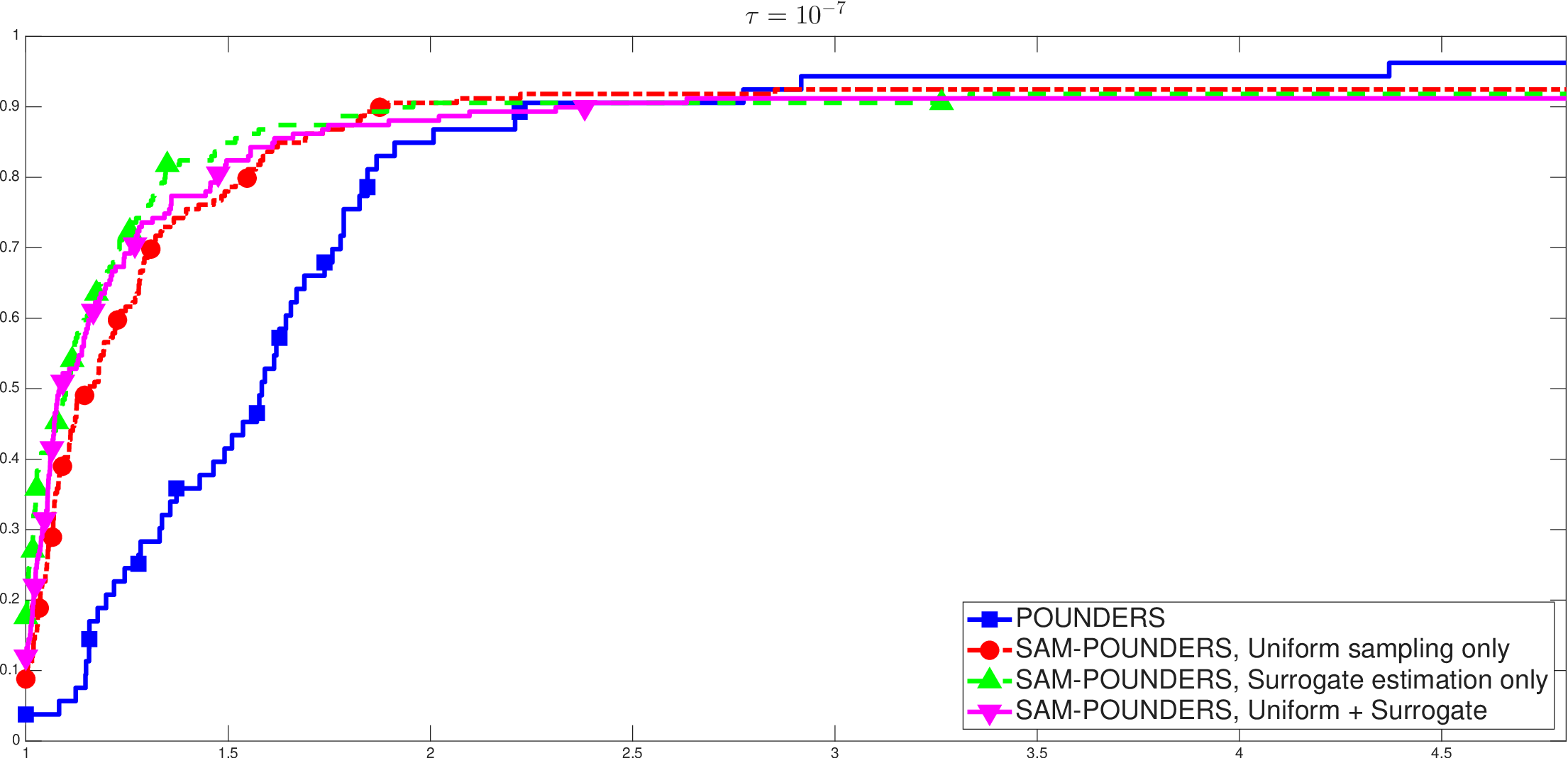}
\end{figure}
The results of this experiment are displayed in \Cref{fig:surrogate}
At both levels of the solvedness tolerance displayed, we see roughly the same trend; the surrogate-informed expert is generally a more useful expert than uniform sampling. 
Importantly, mixing the two experts does not result in too much regret over committing to using only the surrogate-informed expert. 

\subsubsection{AI Expert}\label{sec:gemini}
We experiment here with the use of an expert derived from a commercial LLM; in this experiment, we employed Gemini 3.1 Pro. 
The intention here is not to claim that any particular commercial LLM is best-suited for deriving an expert within our framework.
In fact, by the nature of using a cloud-based LLM, this experiment is impossible to reproduce exactly, and as a result, is not completely rigorous. 
The intention of this experiment is to provide anecdotal evidence concerning the use of general-purpose (not fine-tuned with any particular data set) foundation models to generate expert advice.

For this experiment, we considered a synthetic problem from elementary Newtonian physics; this was motivated by the expectation that a general-purpose foundation model would likely have had relevant information about such a problem contained in its training set. 
In particular, we created a data-fitting problem in which we provided the forward model for the displacement of a 1D damped harmonic oscillator, parameterized by four phenomenological constants representing amplitude ($A$), a damping coefficient ($\gamma$), angular frequency ($\omega$) and initial phase offset ($\phi)$.
In closed form, this forward model is well-known to be 
$$y(t;A,\gamma,\omega,\phi) = Ae^{-\gamma t}\cos(\omega t + \phi) \quad \forall t\geq 0.$$
We fixed $p$ many equally-spaced time points $\{t_j: j=1,\dots,p\}$ and a ``true" set of parameters $\xb^{true} = (A^{true}, \gamma^{true}, \omega^{true}, \phi^{true})$ to generate a corresponding set of synthetic measurements $\{y^{obs}_j := y(t_j;\xb^{true}) + \xi_j: j=1,\dots,p\}$, where $\xi_j$ is a zero-mean Gaussian noise term with variance $A_j/100$. 
Recalling the notation from \cref{eq:pounders_prob}, the resulting nonlinear least squares minimization problem consists of setting $h=\|\cdot\|^2$ and $F$ defined entrywise by $F_j(\xb) = y(t_j;\xb) - y^{obs}_j$. 

In every iteration of \Cref{alg:sam}, Gemini 3.1 Pro was given the verbatim prompt displayed in \Cref{fig:prompt}. 
\begin{figure}[t!]\caption{\label{fig:prompt} Prompt supplied to Gemini Pro 3.1 in every iteration of \texttt{SAM-POUNDERS}. Some sentences are highlighted here (but not in the actual prompt used in the experiment) to be discussed in inline text.}
\begin{framed}
    I am fitting four phenomenological parameters (amplitude, damping coefficient, angular frequency, and initial phase offset) of a damped harmonic oscillator to experimental data obtained by measuring the displacement at \texttt{($p$)} equally spaced candidate time points from time \texttt{($t_0$)} seconds to \texttt{($t_1$)} seconds. I am running a forward model code that takes as input both 1) a subset of these time points and 2) a set of phenomenological parameters. The forward model returns simulated measurements of displacement at the input time points given the input parameters (there is simulated measurement error). \textbf{Let's pretend that evaluating the forward model is a very expensive operation (I know it isn't, but humor me).} In an outer loop optimization method, I am maintaining \texttt{($p$)} separate local quadratic models of the displacement as a function of the four parameters, one model per each time point. Each local quadratic model is characterized by a center point, around which the model is locally valid. In each iteration of the optimization method, I can update a small batch of the quadratic models so that their centers are all updated to a given, incumbent set of parameters. At the end of this prompt, I will provide a json that shows, for each value of time t, at what parameter values the quadratic model of the corresponding model is currently centered. With this setup in mind, give me an array of length \texttt{($p$)} of nonnegative real-valued numbers that sum to 1.0; each entry of the array should correspond to a probability with which I should update the model center to the incumbent parameter estimate values \texttt{(json containing $x^k$)} at the corresponding time point. \textbf{I will fix your work if I have to, but please try to make sure all entries of the array are nonnegative and sum to 1.0. Also, please double check that there are exactly \texttt{($p$)} entries in the array. Think hard about the nonlinear relationship between time t and the four phenomenological parameters. Also think hard about the fact I am using quadratic models and what the second-order Taylor expansion might tell you about the quadratic model's accuracy at the incumbent.} Please respond only with a json with two fields, one containing the array, labelled 'array', and one containing your reasoning, labelled 'reasoning'. In your reasoning, if you used any formulas to derive your array, then please tell me how you interpreted the parameter names in the json that I provided you as symbols in your formulas. Please keep in mind that I will be giving you this same prompt multiple times, and so you will be able to see which models I decided to update. The json with the model center data I promised is: \texttt{(json)}. 
    \end{framed}
    \end{figure}
Some remarks on this prompt are in order here. 
Even with the prompt's reminders to ensure that the returned array was indeed in $\DD^p$, this was frequently violated; some entries would be negative (but relatively small in magnitude), and sometimes there would be more than $p$ entries in the array (but notably, never fewer than $p$ entries). 
When post-processing the returned array, we computed the absolute value of the array. 
Additionally, whenever an array contained more than $p$ entries, we truncated the array to length $p$. 

As a positive, we highlight our inclusion of a sentence ``Let's pretend that evaluating the forward model is a very expensive operation (I know it isn't, but humor me).". 
This was included because Gemini Pro 3.1 correctly identified that the forward model was easy to compute, and that our proposed approach of fitting quadratic models was unnecessary. 
However, even without this sentence, Gemini Pro 3.1 still returned an array as instructed, but would almost always mention this in its reasoning. 
This type of behavior has important implications for more useful science use cases, since it suggests that the foundation model might suggest better classes of models (in this trivial case, the exact forward model) for data-fitting tasks, even when not prompted to do so. 

We also highlight the necessity of the sentence ``Think hard about the nonlinear relationship between time $t$ and the four phenomenological parameters" in performing this experiment.
In the absence of this sentence, Gemini Pro 3.1 (correctly) guessed that the ``outer-loop optimization method" in question was a nonlinear least squares algorithm. 
In turn, Gemini Pro 3.1 exploited information in the provided json to approximate the current residuals and tried to propose an importance sampling scheme that sampled proportionally to observed residual magnitudes.
Such a sampling strategy is a coarse approximation to the Lipschitz estimation discussed in \Cref{sec:lipschitz_exp} and effectively treats the forward model $F(\xb)$ as a black box, which was not the desired effect. 
Additionally, we had to include the sentence ``Also think hard about the fact I am using quadratic models and what the second-order Taylor expansion might tell you about the quadratic model's accuracy at the incumbent." Without this sentence, Gemini Pro 3.1 effectively ignored the presence of quadratic models altogether and only took into consideration the Hessian of the forward model, completely ignoring the distance between the incumbent $\xb^k$ and model centers $\{\cb^k_j\}$.  
However, once provided with these reminder sentences to consider nonlinearities of the forward model and how they might be modeled by quadratic functions, example reasoning text supplied by Gemini Pro 3.1 looked like that displayed in \Cref{fig:reasoning}, which we have relegated to the appendix for reference.

For the experiment, we set the ground truth parameters $\xb^{true} = (A^{true}, \gamma^{true}, \omega^{true},\phi^{true}) = \\(1.5, 0.2, 2.0, \pi/2)$. We set $p=40$, and let the batch size for \texttt{SAM-POUNDERS} variants be $b=4$; this latter decision was based in part on the fact that there are only four parameters in the problem, and so every batch potentially admits a determined system of equations.  
We ran \texttt{POUNDERS} as a baseline and compared against three variants of \texttt{SAM-POUNDERS}, each with only one expert: one employing the Lipschitz estimation scheme of \Cref{sec:lipschitz_exp}, the uniform expert, and the expert derived by prompting Gemini Pro 3.1 as in \Cref{fig:prompt}. 
In order to guarantee that all solvers identified the same local minimum, we chose as a common initial point $\xb^0 = \xb^{true} + 0.1 \eb,$ where $\eb\in\Reals^4$ denotes the vector of all ones. 
Results are displayed in \Cref{fig:expert_exp}. 
We see a practical advantage, in terms of number of component function evaluations performed, of using \texttt{SAM-POUNDERS} with Gemini Pro 3.1 sampling to achieve four or five digits of accuracy, at which point uniform random sampling is equally as good. 
For seven or more digits of accuracy, \texttt{POUNDERS} is the preferred solver, although we remind the reader that with observations exhibiting Gaussian noise with variance on the order of $10^{-2}$, seven digits of accuracy are probably not meaningful. 
\begin{figure}[t!]
    \caption{\label{fig:expert_exp} Comparing performance of three variants of \texttt{SAM-POUNDERS} against \texttt{POUNDERS} on the problem of recovering parameters for a 1D damped harmonic oscillator. Center lines denote median performance over 30 trials, outer bands denote $10-90\%$ percentile performance.}
    \centering
    \includegraphics[width=.99\textwidth]{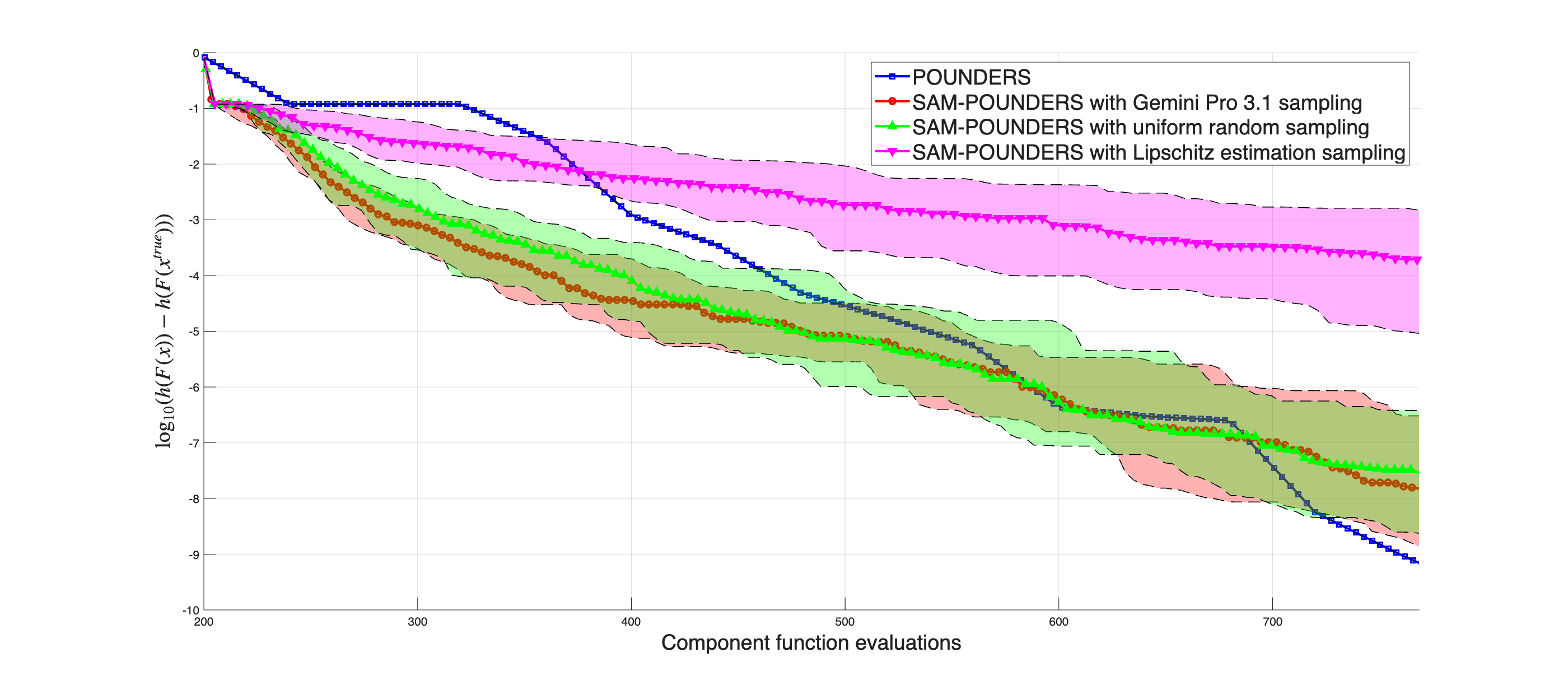}
\end{figure}

While this experiment itself is uninteresting from a scientific perspective, it does suggest an interesting future.
A 1D damped harmonic oscillator is such a textbook example in physics that it is inevitable that it was included in a training set for any commerical LLM; in this sense, the specific use of Gemini Pro 3.1 was unimportant here. 
However, imagining a future with foundation models trained specifically on curated scientific datasets, the framework presented here represents a significant opportunity to insert knowledge and intuition at the level of a domain expert into an agentic workflow.
Agents can search and retrieve relevant literature, and other agents can draw on prior theoretical results, experimental observations, and existing simulation studies to establish a current state of knowledge. 
Synthesizing this evidence, a foundation model (with appropriate tool calling capabilities to handle proposed score calculations) can generate contextually relevant expert advice within \texttt{SAM-POUNDERS}. This has the potential to yield significant acceleration in time to solution by exploiting known scientific results potentially ignored by the problem modeler.    

\section{Conclusion}\label{sec:conclusion} 
In this manuscript, we revisited the framework of stochastic average model (SAM) methods introduced in \cite{SAMP2022}, relevant for problems that involve a finite sum (and certain other compositions beyond sums) of $p$ computationally expensive summands. 
We identified that, for the purposes of analysis, the method in \cite{SAMP2022} made some difficult assumptions on the accuracy of ameliorated estimators that had practical implications.
In particular, the analysis assumed the availability of an optimal, but unknowable, sampling distribution that was only coarsely approximated in practice. 
Misestimation of this distribution -- which could happen occasionally in practice -- resulted in failure, as in the deliberately engineered example illustrated in \Cref{fig:failure}. 
To protect against such failures, and informed by the assumptions on the ameliorated estimator, the SAM method proposed in \cite{SAMP2022} employed a dynamic batch size $b_k$ that satisfied $b_k\to p$ as $k\to\infty$, guaranteeing eventual convergence. 
However, generating such conservative adaptive sequences was dissatisfying in practice; given constrained computational resources and known parallelization capabilities, one would prefer to work with a fixed batch size if possible. 

In the same example of failure in \Cref{fig:failure}, employing instead a naive sampling distribution (uniform sampling) would have prevented non-convergence. 
This informed our consideration of adopting a contextual bandit method (Exp4), which quantifiably curbs regret with respect to a baseline sampling distribution (in particular, uniform sampling), see \Cref{thm:sublinear_regret}. 

This fairly straightforward fix made employing a fixed batch size in SAM methods feasible in practice, as demonstrated in our experiments with \texttt{SAM-POUNDERS} that mixed the Lipschitz estimation-informed sampling distribution proposed in \cite{SAMP2022} with uniform sampling according to Exp4, see \Cref{sec:lipschitz_exp}. 
This already resolved the main practical concern that we brought into the work discussed in this manuscript. 
However, the generality of an expert in the contextual bandits framework opened interesting pathways for future research. 

When alternative, and significantly less expensive, surrogate models (the $\{\sigma_j: j=1,\dots,p\}$ of \Cref{sec:surrogate}) of $F(\xb)$ are available, they can be employed to construct experts with very little computational overhead. 
These surrogates are relevant to settings in computational science where prior data may already exist in similar, but not exactly identical, regimes to that described by $F(\xb)$.
As a particularly structured example, such a setting encompasses the use of low-fidelity models derived from $F(\xb)$, but employing looser convergence tolerances or coarser mesh grids. 
These surrogates are also relevant to settings where specific emulators were already trained using data from similar, but not necessarily identical, regimes. 
Of significant note is that the approach described in this manuscript does not require any specific modeling of, for example, a prior/posterior distribution describing plausible $F(\xb)$, or any direct mapping between fidelity levels. 
Those types of modeling generally involve significant human engineering. 
The SAM method of this manuscript depends entirely on bandit feedback, dynamically rewarding useful experts, while penalizing non-useful experts; all experts are valid until sufficient evidence is generated to deter their use. 
As a somewhat extreme example of this, we considered in \Cref{sec:gemini} the idea of letting AI models directly suggest sampling distributions for SAM methods, given sufficient context about the computational workflow. 
While we intentionally chose a toy example which was almost guaranteed to result in relatively strong performance by the AI expert, this does suggest interesting work that can be done within scientific workflows, particularly when foundation models are fine-tuned with specific domain science knowledge.
In an extension of the work described in this manuscript, an agentic workflow can be designed to fetch relevant context about, and related data to, science questions, monitor the progress of a SAM method, propose sampling distributions based on a synthesis of this data, and execute tool calls to compute sampling distributions. 

\section*{Funding} 
This material is based upon work supported by the U.S. Department of Energy, Office of Science, Office of Advanced Scientific Computing Research Contract
Nos.\ DE-AC02-06CH11357, and by the Scientific Discovery through Advanced Computing (SciDAC) Program through the FASTMath Institute.

\section*{Data availability statement}
Code to reproduce all experiments and figures in this paper are available at the static repository \url{https://github.com/mmenickelly/bandit_sampounders}.
However, as mentioned in the paper, because Gemini Pro 3.1 is a cloud-based LLM provider, not even the author will be able to reproduce exactly the results of \Cref{sec:gemini}, however it is likely that regenerated results will be characteristically similar. 


\begin{appendix}
    \section{Proof of \Cref{thm:sublinear_regret}}
    \begin{proof}
    For notational convenience, define $q_{k,n} := w_{k,n}/W_k$
    and define $y_{k,n} := \mathbf{e}^{k,n\top}\hat{\mathbf{d}}^k$
    for all $k\in\{1,\dots,K\}$ and for all $n\in\{1,\dots,N\}$. 
    Observe that for all $k\in\{1,2,\dots,K-1\}$, 
    \begin{align*}
    \displaystyle\frac{W_{k+1}}{W_k} 
    = \displaystyle\sum_{n=1}^N \frac{w_{k+1,n}}{W_k}
    = \displaystyle\sum_{n=1}^N q_{k,n} \exp\left(\frac{\gamma y_{k,n}}{p} \right)
    & \leq \displaystyle\sum_{n=1}^N q_{k,n} \left[1 + \frac{\gamma y_{k,n}}{p} + (e-2)\left(\frac{\gamma y_{k,n}}{p}\right)^2\right]\\
    & = 1 + \left(\displaystyle\frac{\gamma}{p}\right)\displaystyle\sum_{n=1}^N q_{k,n}y_{k,n} + (e-2)\left(\displaystyle\frac{\gamma}{p}\right)^2 \displaystyle\sum_{n=1}^N q_{k,n} y_{k,n}^2. 
    \end{align*}

The inequality holds because one can derive, by observing that the functions $\exp(z)$ and $1 + z + (e-2)z^2$ intersect at $z=0$ and $z=1$ and comparing these functions' first and second derivatives, that for all $z\in[0,1]$, $\exp(z)\leq 1+z+(e-2)z^2$; here, we denote $e=\exp(1)$.  

Then,
\begin{align*}
    \displaystyle\ln\left(\frac{W_K}{W_1}\right) 
= \ln\left(\prod_{k=1}^{K-1}\frac{W_{k+1}}{W_k}\right)
& \leq \ln\left(\prod_{k=1}^{K-1} \left[1 + \left(\displaystyle\frac{\gamma}{p}\right)\displaystyle\sum_{n=1}^N q_{k,n}y_{k,n} + (e-2)\left(\displaystyle\frac{\gamma}{p}\right)^2\displaystyle\sum_{n=1}^N q_{k,n} y_{k,n}^2\right]\right)\\
& \leq \ln \left(\prod_{k=1}^{K-1} \exp\left(\left(\displaystyle\frac{\gamma}{p}\right)\displaystyle\sum_{n=1}^N q_{k,n}y_{k,n} + (e-2)\left(\displaystyle\frac{\gamma}{p}\right)^2\displaystyle\sum_{n=1}^N q_{k,n} y_{k,n}^2\right) \right)\\
& = \displaystyle\sum_{k=1}^{K-1} \left[\left(\displaystyle\frac{\gamma}{p}\right)\displaystyle\sum_{n=1}^N q_{k,n}y_{k,n} + (e-2)\left(\displaystyle\frac{\gamma}{p}\right)^2\displaystyle\sum_{n=1}^N q_{k,n} y_{k,n}^2\right]\\
& = \left[\displaystyle\frac{\gamma}{p}\sum_{k=1}^{K-1} \sum_{n=1}^N q_{k,n}y_{k,n}\right] + \left[(e-2)\left(\frac{\gamma}{p}\right)^2 \sum_{k=1}^{K-1}\sum_{n=1}^N q_{k,n} y_{k,n}^2\right],
\end{align*}
where the second inequality is because $1+z\leq \exp(z)$ for all $z$, an immediate consequence of the Taylor expansion of $\exp(z)$.

By the definition of $W_K$ and $W_1$, it holds for the $n$th expert that 
$$\ln\left(\frac{W_K}{W_1}\right) \geq \ln(w_{K,n}) - \ln(N)  =
\ln\left(\prod_{k=1}^{K-1}\exp\left(\frac{\gamma y_{k,n}}{p}\right) \right)- \ln(N)
= \displaystyle\frac{\gamma}{p}\sum_{k=1}^{K-1} y_{k,n}- \ln(N). 
$$
Combining the above two displays, 
$$\displaystyle\frac{\gamma}{p}\sum_{k=1}^{K-1}y_{k,n} - \ln(N) \leq \left[\displaystyle\frac{\gamma}{p}\sum_{k=1}^{K-1} \sum_{n=1}^N q_{k,n}y_{k,n}\right] + \left[(e-2)\left(\frac{\gamma}{p}\right)^2 \sum_{k=1}^{K-1}\sum_{n=1}^N q_{k,n} y_{k,n}^2\right], $$
or equivalently,
\begin{equation}\label{eq:int1}
\left[\displaystyle\sum_{k=1}^{K-1}y_{k,n}\right] - \left[\frac{(e-2)\gamma}{p} \sum_{k=1}^{K-1}\sum_{n=1}^N q_{k,n} y_{k,n}^2\right] - \frac{p\ln(N)}{\gamma}
\leq \sum_{k=1}^{K-1} \sum_{n=1}^N q_{k,n}y_{k,n}.
\end{equation}
Using the definition of $y_{k,n}$, it holds for any $k\in\{1,\dots,K\}$ that
\begin{align}\label{eq:int2}
    \displaystyle\sum_{n=1}^N q_{k,n} y_{k,n} &
    =\displaystyle\sum_{n=1}^N q_{k,n} \left(\sum_{j\in I_k}\mathbf{e}^{k,n}_j \hat{\mathbf{d}}^k_j\right)
    = \displaystyle\sum_{j\in I_k}\hat{\mathbf{d}}^k_j\left[\sum_{n=1}^N q_{k,n} \mathbf{e}^{k,n}_j\right] \\\nonumber
    & = \displaystyle\sum_{j\in I_k} \hat{\mathbf{d}}^k_j\left(\frac{\pi^k_j - \frac{\gamma b}{p}}{1-\gamma}\right)
    = \displaystyle\sum_{j\in I_k} \frac{\mathbf{d}_j}{\pi^k_j}\left(\frac{\pi^k_j - \frac{\gamma b}{p}}{1-\gamma}\right)
    \leq \frac{1}{1-\gamma}\displaystyle\sum_{j\in I_k} \mathbf{d}_j,
\end{align}
where the equality at the line break is due to the update rule for $\pi_j^k$ in \Cref{alg:exp4}. 

We now aim to upper bound $\displaystyle\sum_{n=1}^N q_{k,n} y_{k,n}^2$ in terms of $\displaystyle\sum_{j\in I_k} \hat{\mathbf{d}}^k_j$. 
Let $k\in\{1,\dots,K\}$ and let $n\in\{1,\dots,N\}$ be arbitrary. 
First, using an application of Cauchy-Schwarz inequality,
\begin{align*}
y_{k,n}^2 
= \left(\displaystyle\sum_{j\in I_k} \mathbf{e}^{k,n}_j \hat{\mathbf{d}}^k_j\right)^2
= \left(\displaystyle\sum_{j\in I_k} \sqrt{\mathbf{e}^{k,n}_j} \cdot \sqrt{\mathbf{e}^{k,n}_j}\hat{\mathbf{d}}^k_j \right)^2
& \leq \left(\sum_{j\in I_k}\mathbf{e}^{k,n}_j\right)
\left(\sum_{j\in I_k}\mathbf{e}^{k,n}_j(\hat{\mathbf{d}}^k_j)^2 \right)\\
& = b \sum_{j\in I_k}\mathbf{e}^{k,n}_j(\hat{\mathbf{d}}^k_j)^2,
\end{align*}
by the definition of $\mathbf{e}^{k,n}_j$. 
Now taking the weighted sum over all $N$ experts,
\begin{align}\label{eq:int3}
\displaystyle\sum_{n=1}^N q_{k,n} y_{k,n}^2 
& \leq b \sum_{n=1}^N q_{k,n} \sum_{j\in I_k}\mathbf{e}^{k,n}_j(\hat{\mathbf{d}}^k_j)^2
= b\displaystyle\sum_{j\in I_k}(\hat{\mathbf{d}}^k_j)^2\displaystyle\sum_{n=1}^N q_{k,n}\mathbf{e}^{k,n}_j
=  b\displaystyle\sum_{j\in I_k} (\hat{\mathbf{d}}^k_j)^2 \left(\displaystyle\frac{\pi^k_j - \frac{\gamma b}{p}}{1-\gamma} \right)\\\nonumber
& \leq \frac{b}{1-\gamma}\displaystyle\sum_{j\in I_k} \pi^k_j (\hat{\mathbf{d}}^k_j)^2
= \frac{b}{1-\gamma}\displaystyle\sum_{j\in I_k} \pi^k_j \frac{\mathbf{d}^k_j}{\pi^k_j}\hat{\mathbf{d}}^k_j
\leq \frac{bD}{1-\gamma}\displaystyle\sum_{j\in I_k} \hat{\mathbf{d}}^k_j.
\end{align}

Combining \cref{eq:int1}, \cref{eq:int2}, and \cref{eq:int3}, we have that for each expert $n\in\{1,\dots,N\}$, 
\begin{equation}\label{eq:ineq1}
\sum_{k=1}^K \sum_{j\in I_k} \mathbf{d}_j
\geq \left[(1-\gamma)\displaystyle\sum_{k=1}^{K-1}y_{k,n}\right] - \left[\frac{(e-2)bD\gamma}{p} \sum_{k=1}^{K-1}\sum_{j\in I_k}\hat{\mathbf{d}}^k_j\right] - \frac{p(1-\gamma)\ln(N)}{\gamma}.
\end{equation}
We now consider the expectation, with respect to the filtration generated by draws of $I_k$, of both sides of \cref{eq:ineq1}. 
Notice that for the $n$th expert, for each $k\in\{1,\dots,K\}$, and conditioned on the draws $\{I_j: 1 \leq j<k-1\}$, 
$$\mathbb{E}\left[y_{k,n}\right] = \mathbb{E}\left[\sum_{j=1}^p\mathbf{e}^{k,n}_j \hat{\mathbf{d}}^{k}_j \right] = \sum_{j=1}^p \mathbf{e}^{k,n}_j \mathbf{d}^{k}_j.$$
Without loss of generality, suppose the uniform expert is indexed as $n=1$. 
Then, considering the second term in the left hand side of \cref{eq:ineq1}, 
$$\frac{1}{p}\mathbb{E}\left[\displaystyle\sum_{k=1}^K \sum_{j=1}^p \hat{\mathbf{d}}^k_j \right]
=\displaystyle\sum_{k=1}^K\sum_{j=1}^p \frac{1}{p}\mathbf{d}^k_j = C^{K,1} \leq \displaystyle\max_{n=1,\dots, N} C^{K,n}. 
$$
Thus, from \cref{eq:ineq1}, we have, for all $n$ experts, 
$$
\mathbb{E}\left[\sum_{k=1}^K \sum_{j\in I_k} \mathbf{d}_j\right]
\geq (1-\gamma) C^{K,n} - 
(e-2)bD\gamma \displaystyle\max_{n=1,\dots,N} C^{K,n} 
- \frac{p(1-\gamma)\ln(N)}{\gamma}
$$
and so
$$
\mathbb{E}\left[\sum_{k=1}^K \sum_{j\in I_k} \mathbf{d}_j\right]
\geq (1 - ((e-2)bD + 1)\gamma) \displaystyle\max_{n=1,\dots,N} C^{K,n} 
- \frac{p(1-\gamma)\ln(N)}{\gamma},
$$
from which the theorem follows. 
\end{proof}

\section{Reasoning Text Generated by Gemini Pro 3.1}
\begin{figure}[h!]\caption{\label{fig:reasoning} Reasoning text provided by Gemini Pro 3.1 on a random iteration of \texttt{SAM-POUNDERS}. Some \LaTeX \hspace{1pc} formatting performed, otherwise text is verbatim. }
    \begin{framed}
    \tiny
To determine the probability with which to update each quadratic model's center, we need to prioritize models that are most "out-of-date" or "inaccurate" at the current incumbent parameter estimate. The accuracy of a local quadratic model (derived from a second-order Taylor expansion) degrades as the distance from its center point increases and as the underlying function's nonlinearity (higher-order derivatives) becomes more pronounced.
     
     The displacement of a damped harmonic oscillator is given by the formula:
     $$y(t, A, d, w, p) = A \exp(-dt) \cos(wt + p)$$
     
     Where:
     \begin{itemize}
    \item $A$ corresponds to `amplitude`
     \item $d$ corresponds to `damping coefficient`
     \item $w$ corresponds to `angular frequency`
     \item $p$ corresponds to `phase offset`
     \item $t$ corresponds to `time`
     \end{itemize}
     
     Let the incumbent parameters be $P_{inc} = (A_{inc}, d_{inc}, w_{inc}, p_{inc}) = (0.4, 0.6, 0.6, 0.6)$.
     Let the current center parameters for the $i$-th model be $P_{current,i} = (A_i, d_i, w_i, p_i)$ and its time point be $t_i$.
     
     We calculate a "score" for each model, reflecting the urgency of its update. This score is based on a weighted sum of the absolute differences between the model's current center parameters and the incumbent parameters. The weights are the magnitudes of the partial derivatives (sensitivities) of the displacement function with respect to each parameter, evaluated at the incumbent parameters and the specific time point $t_i$.
     
     The magnitudes of the partial derivatives (sensitivities) are:
     \begin{enumerate}
     \item  Sensitivity to Amplitude ($S_A$):
         $S_A(t, P_{inc}) = |\partial y/\partial A|_{(t, P_{inc})} = \exp(-d_{inc}t)  |\cos(w_{inc}t + p_{inc})|$
     
     \item  Sensitivity to Damping Coefficient ($S_d$):
         $S_d(t, P_{inc}) = |\partial y/\partial d|_{(t, P_{inc})} = A_{inc} t  \exp(-d_{inc}t)|\cos(w_{inc}t + p_{inc})|$
     
     \item  Sensitivity to Angular Frequency $(S_w)$:
         $S_w(t, P_{inc}) = |\partial y/\partial w|_{(t, P_{inc})} = A_{inc}  t  \exp(-d_{inc}t) |\sin(w_{inc}t + p_{inc})|$
     
     \item  Sensitivity to Phase Offset ($S_p$):
         $S_p(t, P_{inc}) = |\partial y/\partial p|_{(t, P_{inc})} = A_{inc} \exp(-d_{inc}t) |\sin(w_{inc}t + p_{inc})|$
         \end{enumerate}
     
     The score for each time point $i$ is then calculated as:
     $Score_i = S_A(t_i, P_{inc})  |A_i - A_{inc}| + S_d(t_i, P_{inc}) |d_i - d_{inc}| + S_w(t_i, P_{inc}) |w_i - w_{inc}| + S_p(t_i, P_{inc}) |p_i - p_{inc}|$
     
     \textbf{Rationale for this scoring method:}

     \begin{itemize}
     \item   \textbf{Distance to Incumbent:} The terms $|Param_i - Param_{inc}|$ directly account for how far the current model center is from the incumbent parameters. A larger distance implies the current quadratic approximation is less likely to be accurate at the incumbent point.
     \item   \textbf{Nonlinear Relationship \& Quadratic Model Accuracy:}
     \begin{itemize}
         \item  The sensitivities (weights) capture the local steepness of the displacement function with respect to each parameter. If the displacement is highly sensitive to a parameter (large $S_k$), then even a small difference $|Param_i - Param_{inc}|$ can lead to a significant error in the model's prediction, thus requiring a higher update priority.
         \item  Crucially, the sensitivities $S_d$ and $S_w$ are proportional to $t$. This means that at larger time values ($t_i$), the displacement is more sensitive to changes in `damping coefficient` and `angular frequency`. This also correlates with the magnitudes of the second-order derivatives (Hessian terms) for $d$ and $w$ which scale with $t^2$. A larger magnitude in the Hessian indicates greater curvature, meaning the quadratic model becomes less accurate more rapidly as one moves away from its center. By incorporating $t$ into the sensitivities for $d$ and $w$, models corresponding to later time points, where these parameters have a more pronounced non-linear effect, are naturally given higher priority for updating if their $d$ or $w$ parameters are not aligned with the incumbent.
     \end{itemize}
     \end{itemize} 
     Finally, these scores are normalized by their sum across all 40 time points to produce a probability array where all entries are non-negative and sum to 1.0.
     \end{framed}
     \end{figure}
\end{appendix}

\framebox{\parbox{\columnwidth}{The submitted manuscript has been created by UChicago Argonne, LLC, Operator of Argonne National Laboratory (`Argonne'). Argonne, a U.S. Department of Energy Office of Science laboratory, is operated under Contract No. DE-AC02-06CH11357. The U.S. Government retains for itself, and others acting on its behalf, a paid-up nonexclusive, irrevocable worldwide license in said article to reproduce, prepare derivative works, distribute copies to the public, and perform publicly and display publicly, by or on behalf of the Government.  The Department of Energy will provide public access to these results of federally sponsored research in accordance with the DOE Public Access Plan. \url{http://energy.gov/downloads/doe-public-access-plan}.}}
\end{document}